\title{The structure of the wave operator in four dimensions in the presence of resonances}
\author{Angus Alexander, 
Adam Rennie\thanks{email: 
\texttt{angusa@uow.edu.au, renniea@uow.edu.au}}
\\[3pt]
School of Mathematics and 
Applied Statistics, University of Wollongong,\\
Wollongong, Australia\\
}
\def\section{\@startsection{section}{1}{\z@}{-3.5ex plus -1ex minus
  -.2ex}{2.3ex plus .2ex}{\large\bf}}
\def\subsection{\@startsection{subsection}{2}{\z@}{-3.25ex plus -1ex
  minus -.2ex}{1.5ex plus .2ex}{\normalsize\bf}}
\numberwithin{equation}{section} %% needs `amsmath' package
\theoremstyle{plain} %% needs `amsmath' package
\newtheorem{thm}{Theorem}[section]
\newtheorem{lemma}[thm]{Lemma}
\newtheorem{prop}[thm]{Proposition}
\newtheorem{cor}[thm]{Corollary}
\theoremstyle{definition} %% needs `amsmath' package
\newtheorem{defn}[thm]{Definition}
\theoremstyle{remark} %% needs `amsmath' package
\DeclareMathOperator{\Dom}{Dom}   %% domain of an operator
\newcommand{\eps}{\varepsilon} %% short for \varepsilon
\newcommand{\B}{\mathcal{B}}  %% another algebra
\newcommand{\C}{\mathbb{C}}   %% complex numbers
\renewcommand{\d}{\mathrm{d}} %% \d a = [\D,a]
\newcommand{\e}{\mathrm{e}}
\newcommand{\F}{\mathcal{F}}  %% another bi/module
\newcommand{\Fi}{\mathcal{F}} %% or Fourier transform
\renewcommand{\H}{\mathcal{H}}  %% a Hilbert space
\newcommand{\K}{\mathcal{K}}  %% compact endomorphisms
\newcommand{\norm}[1]{\left|\left|#1\right|\right|} %% for norms
\newcommand{\ox}{\otimes}     %% tensor product
\newcommand{\R}{\mathbb{R}}   %% real numbers
\newcommand{\Sf}{\mathbb{S}}  %% sphere
\newcommand{\vp}{\varphi}
\newcommand{\Z}{\mathbb{Z}} %%integers
\newcommand{\stroke}{\mathbin|}     %% (for `\pair' and such)
\newcommand{\od}[2]{\frac{\mathrm{d}#1}{\mathrm{d}#2}}
\def\pairL_#1(#2|#3){{}_{#1}(#2\stroke#3)} %% hermitian pairing _B(s|t)
\def\pairR(#1|#2)_#3{(#1\stroke#2)_{#3}} %% hermitian pairing (s|t)_A
\def\scal<#1|#2>{\langle#1\stroke#2\rangle} %% scalar product <y|z>
\renewcommand{\epsilon}{\varepsilon}
\theoremstyle{definition}
\definecolor{MyBlue}{cmyk}{1,0.13,0,0.63}
\definecolor{MyGreen}{cmyk}{0.91,0,0.88,0.52}
\newcommand{\mylinkcolor}{MyBlue}
\newcommand{\mycitecolor}{MyGreen}
\newcommand{\myurlcolor}{black}
\begin{document}

\maketitle

\vspace{-2pc}

\begin{abstract}
We show that the wave operators for Schr\"{o}dinger scattering in $\R^4$ have a particular form which depends on the existence of resonances. As a consequence of this form, we determine the contribution of resonances to the index of the wave operator.
\end{abstract}
\maketitle

\parindent=0.0in
\parskip=0.00in

%\tableofcontents

\parskip=0.06in

\section{Introduction}
\label{sec:intro}

In this paper we study scattering for Schr\"{o}dinger operators in $\R^4$. In particular we analyse the structure of the wave operator in terms of the scattering operator, the generator of dilations, and a novel contribution corresponding to the existence of resonances. The form of the wave operator allows us to deduce the contribution of resonances to Levinson's theorem via the index of the wave operator.

Levinson's theorem \cite{levinson49} gives in dimension $n = 1$ that the number of eigenvalues (counted with multiplicity) of the Schr\"{o}dinger operator $H= H_0+V$ (with $H_0 = -\Delta$) for a suitably decaying potential $V$ satisfies
\begin{align}
N &= \frac{1}{\pi} \left(\delta(0)-\delta(\infty) \right) +\frac12 \nu,
\label{eq:Lev}
\end{align}
where $\delta$ is the scattering phase and $\nu \in \{0,1\}$ depends on the existence of a resonance (a distributional solution to $(-\Delta+V)\psi = 0$ with $\psi \notin L^2(\R)$).

In dimension $n = 3$, it was shown in \cite[Section 5]{newton60} that for spherically symmetric potentials, Equation \eqref{eq:Lev} holds for each angular momentum mode. This result was extended by \cite[Section 7]{newton77} for general potentials, and it was shown that resonances can occur and provide a half-integer contribution to a Levinson-type theorem.

One approach to resonances is the threshold behaviour of the spectrum of Schr\"{o}dinger operators as the strength of the potential is scaled, $\lambda\to \lambda V$. A resonance occurs just as an eigenvalue emerges from the continuous spectrum. Klaus and Simon \cite[Table I]{KS80} determined in dimension $n = 2, 4$ that the logarithmic behaviour of the singularity of the free resolvent $R_0(z) = (H_0-z)^{-1}$ near $z = 0$ gives resonances with different properties to those found in odd dimensions. In particular, in the case of a spherically symmetric potential the `resonances' in dimension $n = 2$ are either $s$-resonances or $p$-resonances (corresponding to angular momentum $\ell = 0$ or $\ell = 1$ respectively) whilst the resonances in dimension $n = 4$ are $s$-resonances. It was observed in \cite{jensen80, KS80} that no such phenomena can occur in dimension $n \geq 5$. 

In \cite{jensen84} a precise low energy expansion of the resolvents $R_0(z)$ and $R(z) = (H-z)^{-1}$ was given in dimension $n =4$, providing the definition of a resonance as a solution $\psi$ to the equation $H\psi = 0$ which is not square-integrable but lies in some weighted Sobolev space. Obtaining such an expansion in dimension $n = 2$ was a more difficult task. In \cite{bolle86, bolle88} a low energy expansion of the resolvent $R(z)$ was provided in dimension $n = 2$ to give a precise definition of resonances and determine their contribution to Levinson's theorem. It was shown that $s$-resonances give no contribution to Levinson's theorem and $p$-resonances give an integer contribution, both contrasting the behaviour of resonances in dimension $n = 1,3$.

More recently in dimension $n = 4$ it was shown in \cite[Equation (26b)]{dong02} that in the case of a sufficiently decaying spherically symmetric potential, the $s$-resonances provide an integer contribution to Levinson's theorem. In \cite[Theorem 1.1]{jia12} this result was generalised to non-spherically symmetric potentials.

In \cite{jensen01} a new symmetrised technique was introduced for performing low-energy expansions of an operator related to $R(z)$ in terms of powers of a single variable, avoiding the double Laurent expansion of \cite{bolle88}. The technique in \cite{jensen01} allows us to systematically isolate the behaviour at zero in the spectrum of $H$ into the range of a decreasing sequence of finite rank projections. This technique is described in detail for four dimensional Schr\"{o}dinger operators in \cite{erdogan14}. See also  \cite{goldberg17, green17,yajima22}.

In \cite[Theorem 1.1]{richard13}, \cite[Theorem 1.3]{richard21} and \cite[Theorem 3.1]{AR23} it was shown that for all $n \geq 2$ and suitably decaying potentials $V$ (with the additional assumption that there are no $p$-resonances in dimension $n = 2$ and no resonances in dimension $n = 4$), the wave operator $W_-$ is of the form
\begin{align}\label{eq:wave-op}
W_- &= \textup{Id} + \varphi(D_n)(S-\textup{Id})+K,
\end{align}
where $D_n$ is the generator of the dilation group on $L^2(\R^n)$,  $\vp:\R \to \C$ is given by
\begin{align}\label{eq:vp-defn}
\vp(x) &= \frac12 \left( 1+\tanh{(\pi x)} -i \cosh{(\pi x)}^{-1} \right)
\end{align}
and $K$ is a compact operator. 

In recent years much work has been done on developing formulae analogous to Equation \eqref{eq:wave-op} for various scattering systems, including Schr\"{o}dinger scattering \cite{AR23, kellendonk08, kellendonk12, richard13, richard13ii, richard21}, point interactions \cite{kellendonk06}, rank-one perturbations \cite{richard10}, the Friedrichs-Faddeev model \cite{isozaki12},  Aharanov-Bohm operators \cite{kellendonk11}, lattice scattering \cite{bellissard12}, half-line scattering \cite{inoue20}, discrete scattering \cite{inoue19}, scattering for an inverse-square potential \cite{DR, inoue19ii} and for 1D Dirac operators with zero-range interections \cite{richard14}.

The exclusion of $p$-resonances in dimension $n = 2$ and resonances in dimension $n = 4$ to obtain Equation \eqref{eq:wave-op} is due to the presence of an additional logarithmic singularity near zero in the resolvent expansion provided by \cite[Proposition 5.3]{erdogan14}. Recent work \cite[Lemma 4.3]{ANRR} shows, using the resolvent expansions of \cite{jensen01}, that in the presence of $p$-resonances in dimension $n = 2$ the wave operator satisfies
\begin{align}\label{eq:wave-op-res}
W_- &= \textup{Id}+ \vp(D_n)(S-\textup{Id}) + B_{res} + K,
\end{align}
where $\varphi$ is given in Equation \eqref{eq:vp-defn}, $B_{res}$ is a bounded operator depending on the existence of $p$-resonances and $K$ is a compact operator. 

The nature of the singularity in the resolvent at zero energy in the presence of resonances in dimension $n = 4$ is the same as that of $p$-resonances in dimension $n = 2$. As such, we can use the techniques of \cite{ANRR} to show in Theorem \ref{thm:main-wave} that Equation \eqref{eq:wave-op-res} holds in dimension $n= 4$ with $B_{res}$ an operator depending on the existence of resonances. 

%The resolvent expansion techniques of \cite{jensen01} have been applied to four dimensional Schr\"{o}dinger operators in \cite{erdogan14, goldberg17} to obtain dispersive estimates and \cite{green17, yajima22} to investigate the boundedness of the wave operator in various $L^p$ spaces, with results depending on the existence of resonances.

After factorising the wave operator as the product of two Fredholm operators, we show in Corollary \ref{cor:index-resonance} that
\begin{align}\label{eq:index}
\textup{Index}(W_-) &= \textup{Index}(W_S) + \dim(P_s),
\end{align}
where $\dim(P_s) \in \{0,1\}$ is the number of linearly independent $s$-resonances and $W_S = \textup{Id}+\vp(D_4)(S-\textup{Id})$ is a Fredholm operator.

For a comparison, we recall that in \cite[Theorem 1.1]{jia12} it is proved that for a sufficiently rapidly decaying potential the number of eigenvalues $N$ of $H$ (counted with multiplicity) is given by
\begin{align}\label{eq:jia-levinson}
-N &= \frac{1}{2\pi i} \int_0^\infty \left( \textup{Tr} \left(S(\lambda)^*S'(\lambda) \right) - c_1 \right) \, \d \lambda  - \beta_2 + \dim(P_s)
\end{align}
for some constants $c_1, \beta_2 \in \C$ depending on the potential. Even after taking into account the equality $\textup{Index}(W_-) = -N$, Equation \eqref{eq:index} cannot be immediately compared with Equation \eqref{eq:jia-levinson} other than to note that the existence of a resonance provides an integer contribution to both. To show that Equation \eqref{eq:jia-levinson} follows from Equation \eqref{eq:index} requires a more subtle analysis of the high-energy behaviour of the scattering matrix in the trace norm, for which we defer to a future publication.

The layout of the paper is as follows. In Section \ref{sec:scats} we introduce the relevant concepts from scattering theory for four-dimensional Schr\"{o}dinger operators and fix our notation. In particular, we discuss the low-energy resolvent expansion provided by \cite{erdogan14, jensen01}, the definition of a resonance and the properties of the scattering matrix. In Section \ref{sec:wave-op} we prove, via a number of technical results and unitary transformations analogous to those in \cite{ANRR}, that the wave operator $W_-$ in dimension $n = 4$ satisfies Equation \eqref{eq:wave-op-res}. In Section \ref{sec:decoupling} we decouple the resonant contribution $B_{res}$ by providing a suitable factorisation of the wave operator and show that Equation \eqref{eq:index} holds, giving a topological interpretation of Levinson's theorem in which resonances provide an integer contribution.

{\bf Acknowledgements} The authors thank Serge Richard for many illuminating conversations on the topic of resolvent expansions and resonances. The first author also acknowledges the support of an Australian Government Research Training Program (RTP) Scholarship. This project was supported by the ARC Discovery grant DP220101196.

%%%%%%%%%%%%%%%%%%%%%%%%%%%%%%%%%%%%%%%%%%%%%%%%%%%%%%%%%%%%%%%%%%%%%
%%%%%%%%%%%%%%%%%%%%%%%%%%%%%%%%%%%%%%%%%%%%%%%%%%%%%%%%%%%%%%%%%%%%%
%%%%%%%%%%%%%%%%%%%%%%%%%%%%%%%%%%%%%%%%%%%%%%%%%%%%%%%%%%%%%%%%%%%%%

\section{Preliminaries on scattering theory} \label{sec:prelim}
\label{sec:scats}

\subsection{Standing assumptions and notation}
\label{subsec:ass-notes}

Throughout this article we will consider the scattering theory on $\R^4$ associated to the operators
\[
H_0=-\sum_{j=1}^4\frac{\partial^2}{\partial x_j^2}=-\Delta
\quad\mbox{and}\quad
H=-\sum_{j=1}^4\frac{\partial^2}{\partial x_j^2}+V
\]
where the (multiplication operator by the) 
potential $V$ is real-valued and satisfies 
\begin{align}
\label{ass11}
|V(x)| &\leq C(1+|x|)^{-\rho}
\end{align}
for some $\rho > 12$. 
We denote the Schwartz space $\mathcal{S}(\R^4)$ and its dual $\mathcal{S}'(\R^4)$ and recall the weighted Sobolev spaces 
\[
H^{s,t}(\R^4)= \Big\{f \in \mathcal{S}'(\R^4): \norm{f}_{H^{s,t}} := \norm{(1+|x|^2)^{\frac{t}{2}} (\textup{Id}-\Delta)^\frac{s}{2} f} < \infty \Big\}
\] 
with index $s \in \R$ indicating derivatives and $t \in \R$ associated to decay at infinity \cite[Section 4.1]{amrein96}. With $\langle\cdot,\cdot\rangle$ the Euclidean inner product on $\R^4$, we denote the Fourier transform by
\[
\F:L^2(\R^4)\to L^2(\R^4),\qquad [\F f](\xi)=(2\pi)^{-2}\int_{\R^4}e^{-i\langle x,\xi\rangle}f(x)\,\d x.
\]
Note that the Fourier transform $\F$ is an isomorphism from $H^{s,t}$ to $H^{t,s}$ for any $s,t \in \R$.  We denote by $\mathcal{B}(\H_1,\H_2)$ and $\mathcal{K}(\H_1,\H_2)$ the bounded and compact operators from $\H_1$ to $\H_2$.
For $z \in \C \setminus \R$, we let
\[
R_0(z)=(H_0-z)^{-1},\qquad R(z)=(H-z)^{-1}
\]
and the boundary values of the resolvent are defined as
\begin{align}
R_0(\lambda \pm i0) &= \lim_{\eps \to 0}{R_0(\lambda \pm i \eps)}\quad\mbox{and}\quad R(\lambda \pm i0) = \lim_{\eps \to 0}{R(\lambda \pm i \eps)}.
\end{align}
The limiting absorption principle \cite[Theorem 4.2]{agmon75} tells us that these boundary values exist in $\mathcal{B}(H^{0,t},H^{2,-t})$ for any $t > \frac12$ and $\lambda \in (0,\infty)$. The operator $H_0$ has purely absolutely continuous spectrum, and in particular no kernel. The operator $H$ can have eigenvalues and for $V$ satisfying Assumption \eqref{ass11} with $\rho > 1$ we have that these eigenvalues are negative, or zero \cite[Theorem 6.1.1]{yafaev10} (see also \cite[Section 1]{kato59}). We let $P_0$ be the kernel projection of $H$, which may be zero.

The one-parameter unitary group of dilations on $L^2(\R^n)$ is
given on $f\in L^2(\R^n)$ by
\begin{align}\label{defn:dilation}
[U_n(t)f](x) &= \e^{\frac{nt}{2}} f(\e^t x),\qquad t\in\R.
\end{align}
We denote the self-adjoint generator of $U_n$ by $D_n$.
The generator of the group $(U_+(t))$ of dilations on the half-line $\R^+$ is denoted $D_+$
(which is $D_1$ restricted to the positive half-line). The generators of the dilation groups are given by
\begin{align}
D_+ &= \frac{y}{i} \od{}{y} + \frac{1}{2i}{ \textup{Id}},\qquad D_n=\sum_{j=1}^n\frac{x_j}{i}\frac{\partial}{\partial x_j}+\frac{n}{2i}{ \textup{Id}}.
\end{align}
Since each of $D_+,D_n$ generate one-parameter groups, we can recognise functions of these operators. For $D_+$ and $\vp:\R\to\C$ a bounded function whose Fourier transform has rapid decay, we have
\begin{align*}
[\vp(D_+) g](\rho) &= (2\pi)^{-\frac12} \int_\R{[\Fi^* \vp](t) \e^{\frac{t}{2}} g(\e^t \rho)\, \d t},
\end{align*}
with a similar formula for $D_n$.

Several Hilbert spaces recur, and we adopt the notation (following \cite[Section 2]{jensen81} which contains an excellent discussion on the relations between the spaces and operators we introduce here)
\[
\H = L^2(\R^4),\quad \mathcal{P} = L^2(\Sf^{3}),\quad \H_{spec} = L^2(\R^+, \mathcal{P}) \cong L^2(\R^+) \otimes \mathcal{P}.
\] 
Here $\H_{spec}$ provides the Hilbert space on which we can diagonalise the free Hamiltonian $H_0$.

Since $V$ is bounded, $H = H_0+V$ is self-adjoint with $\Dom(H) = \Dom(H_0)$. The wave operators 
\[
W_\pm=\mathop{\textup{s-lim}}_{t\to\pm\infty}e^{itH}e^{-itH_0}
\]
exist and are asymptotically complete if $\rho > 1$ \cite[Theorem 1.6.2]{yafaev10}. We will use the stationary scattering theory, which coincides with the time-dependent approach \cite[Section 5.3]{yafaev92} given our assumptions.
For suitable $f,g \in \H$ we can write \cite[Equation 0.6.9]{yafaev10}
\begin{align}\label{eq:statwaveop}
\langle W_\pm f, g \rangle &= \int_\R{\left(\lim_{\eps \to 0}{\frac{\eps}{\pi}\langle R_0(\lambda \pm i \eps) f, R(\lambda \pm i \eps) g \rangle } \right)\, \d \lambda}.
\end{align}

For our analysis of the wave operator, we describe the explicit unitaries which diagonalise our Hamiltonians. 

For $\lambda > 0$ the trace operator $\gamma(\lambda) : \mathcal{S}(\R^4) \to \mathcal{P}$ defined by
$[\gamma(\lambda) f](\omega):= f(\lambda^\frac12 \omega)$ defines a bounded operator and for each $s > \frac12$ and $t \in \R$ extends to a bounded operator on $H^{s,t}$ (see \cite[Theorem 2.4.3]{kuroda78}).

\begin{defn}
\label{def:diag}
For  $\lambda \in \R^+$, $s \in \R$ and $t > \frac12$ we define the operator 
\[
\Gamma_0(\lambda): H^{s,t} \to \mathcal{P}\quad \mbox{by}\quad
[\Gamma_0(\lambda) f](\omega) = 2^{-\frac12} \lambda^{\frac{1}{2}} [\F f](\lambda^\frac12 \omega)
\]
and the operator which diagonalises the free Hamiltonian $H_0$ as
\[
F_0: \H \to \H_{spec}\quad \mbox{by} \quad [F_0 f](\lambda,\omega) = [\Gamma_0(\lambda) f](\omega).
\] 
\end{defn}

\begin{lemma}[{\cite[p. 439]{jensen81}}]
\label{lem:eff-emm}
The operator $F_0$ is unitary. Moreover for $\lambda \in[0,\infty)$, $\omega\in \Sf^{3}$ and $f \in \H_{spec}$ we have
\[
[F_0H_0F_0^* f](\lambda,\omega)=\lambda f(\lambda,\omega)=:[Lf](\lambda,\omega).
\] 
\end{lemma}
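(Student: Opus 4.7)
The plan is to factor $F_0$ through the standard spectral representation of the Laplacian on $\R^4$, splitting it into three elementary unitary pieces: the Fourier transform, passage to polar coordinates, and a radial substitution $\lambda = r^2$. The prefactor $2^{-1/2}\lambda^{1/2}$ appearing in Definition \ref{def:diag} should turn out to be precisely the Jacobian correction required for unitarity at the last step.

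For unitarity I would write $F_0 = U_\lambda \circ U_{\text{pol}} \circ \F$, where $\F:\H\to\H$ is the unitary Fourier transform, $U_{\text{pol}} : L^2(\R^4) \to L^2(\R^+, r^3\,\d r; \calP)$ is the unitary induced by polar coordinates $\xi = r\omega$ (since $\d\xi = r^3\,\d r\,\d\omega$ on $\R^4\setminus\{0\}$, which is full measure), and $U_\lambda : L^2(\R^+, r^3\,\d r;\calP) \to L^2(\R^+, \d\lambda;\calP) = \H_{spec}$ is the substitution $\lambda = r^2$ combined with multiplication by $(\lambda/2)^{1/2}$. Since $r^3\,\d r = \tfrac12\lambda\,\d\lambda$, the map $U_\lambda$ is an isometric bijection. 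Tracing a function through the composition yields
\[
(U_\lambda U_{\text{pol}} \F f)(\lambda,\omega) = 2^{-1/2}\lambda^{1/2}[\F f](\lambda^{1/2}\omega) = [F_0 f](\lambda,\omega),
\]
confirming that $F_0$ is this composition and hence unitary.

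For the diagonalisation, I would first verify the identity on Schwartz functions, where the trace $\gamma(\lambda)$ is pointwise defined. The Fourier transform intertwines $H_0$ with multiplication by $|\xi|^2$; under $U_{\text{pol}}$ this becomes multiplication by $r^2$, and under $U_\lambda$ it becomes multiplication by $\lambda$. Hence $F_0 H_0 F_0^* f = L f$ on the dense subspace $F_0 \mathcal{S}(\R^4) \subset \H_{spec}$. Since $L$ is self-adjoint on its natural domain and $F_0 H_0 F_0^*$ is self-adjoint because $F_0$ is unitary, both closures agree and the intertwining extends to $\Dom(L) = F_0 \Dom(H_0)$ by the standard density argument.

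I do not expect any serious obstacle: the only subtle point is careful bookkeeping of the numerical factor arising from $r^3\,\d r = \tfrac12 \lambda \,\d\lambda$ together with the Jacobian absorbed into $U_\lambda$, to confirm it matches the exact prefactor $2^{-1/2}\lambda^{1/2}$ from Definition \ref{def:diag} rather than some other power of $\lambda$ or $2$.
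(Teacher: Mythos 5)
Your proposal is correct and is exactly the standard construction that the paper delegates to \cite{jensen81}: factoring $F_0$ as the Fourier transform followed by polar coordinates and the substitution $\lambda = r^2$, with the prefactor $2^{-\frac12}\lambda^{\frac12}$ absorbing the Jacobian $r^3\,\d r = \tfrac12 \lambda\,\d\lambda$, and your bookkeeping checks out ($\lambda^{(n-2)/4} = \lambda^{1/2}$ for $n=4$). The closing step---verifying the intertwining on Schwartz functions and then invoking that a self-adjoint operator admits no proper self-adjoint extension, with $F_0\mathcal{S}(\R^4)$ a core for $F_0 H_0 F_0^*$---is likewise the standard argument, so nothing is missing.
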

Here we have defined the operator $L$ of multiplication by the spectral variable.

\subsection{Resolvent expansions, resonances and the scattering operator}\label{sec:resolvents}

Here we recall some known results regarding expansions related to the perturbed resolvent $R(z)$ in the limit $z \to 0$. Only the terms in the expansion relevant to later computations will be shown, however we note that higher terms can be computed explicitly \cite{erdogan14, jensen84, yajima22}. The low energy behaviour is sensitive to the presence of `zero-energy resonances'. These are essentially distributional solutions to $H\psi = 0$ which are not square-integrable but lie in some larger space. We will give the precise definition shortly. We define $v(x) = |V(x)|^\frac12$ and $u(x) = \textup{sign}(V(x))$ for $x \in \R$, so that $U,v$ are self-adjoint, $U$ is unitary and $V = vUv$. 

\begin{defn}
Suppose that $V$ satisfies the assumption \eqref{ass11} for some $\rho > 12$. Then we say that there exists an $s$-resonance (or just simply a resonance) for $H = H_0+V$ if there exists $\psi \in H^{0,-t}$ for some $t > 0$ such that $H\psi = 0$ (in the sense of distributions) and $\psi \notin \H$.
\end{defn}

Since $H$ has no positive eigenvalues, our assumption on the decay of the potential and the limiting absorption principle \cite[Theorem 4.2]{agmon75} guarantee that the norm limits
\begin{align*}
v R_0(\lambda \pm i 0) v &:= \lim_{\eps \to 0}{v R_0(\lambda \pm i \eps) v} \quad \textup{ and } \quad v R(\lambda \pm i0 )v := \lim_{\eps \to 0}{v R(\lambda \pm i \eps) v}
\end{align*}
exist in $\B(\H)$ and are continuous in $\lambda \in (0,\infty)$. For $\lambda, \eps > 0$ we have the equality
\begin{align*}
U - Uv R(\lambda \pm i \eps) v U &= \left(U+vR_0(\lambda \pm i \eps)v\right)^{-1},
\end{align*}
which implies the existence and continuity of $(0,\infty) \ni \lambda \mapsto \left(u+vR_0(\lambda \pm i 0)v\right)^{-1} \in \B(\H)$. Furthermore, we have $\lim_{\lambda \to \infty}{(U+v R_0(\lambda \pm i0)v)^{-1}} = U$, since $\lim_{\lambda \to \infty}{v R_0(\lambda \pm i0) v} = 0$ by \cite[Proposition 7.1.2]{yafaev10}. On the other hand, the existence of the limits $\lim_{\lambda \to 0}{(U+v R_0(\lambda \pm i0)v)^{-1}}$ depends on the existence of resonances and eigenvalues at zero. The problem has been described in detail in \cite{erdogan14} using the method of \cite{jensen01}. We outline the main results below.

Let $k \in \C \setminus \{ 0 \}$ with $\textup{Re}(k) \geq 0$ and define $\eta = \frac{1}{\ln{(k)}}$ (using the principal branch of the logarithm). Define the operator $M(k) = U + vR_0(-k^2) v$. Then we have the following result \cite[Proposition 5.3]{erdogan14}. 
\begin{thm}\label{thm:Mk-expansion}
If $V$ satisfies assumption \eqref{ass11} with $\rho > 12$ and $0 < |k| < k_0$ for sufficiently small $k_0$, the operator $M(k)^{-1}$ has the expansion
\begin{align*}
M(k)^{-1} &= k^{-2} D_2 + k^{-2} h(k) Q_1 \tilde{T}_1 Q_1 + h(k) A_1 + \eta^{-1} h(k) A_2 + \eta^{-1} Q_1 A_3 Q_1 + \tilde{R}(k),
\end{align*}
where $h(k) = (c_1+\eta^{-1} c_2)^{-1}$ for some $c_1, c_2 \in \C$, $\tilde{R}(k)$ is uniformly bounded for $0 < |k| < k_0$,  $D_2, \tilde{T}_1, A_1, A_2, A_3 \in \B(\H)$, $Q_1 \geq Q_2$ are orthogonal projections, $T_1 = Q_1 - Q_2$ is (at most) a rank one projection and
\begin{align*}
\tilde{T}_1 &= T_1 - T_1 C_{12} Q_2 - Q_2 C_{21} T_1 + Q_2 C_{22} Q_2
\end{align*}
for some $C_{12}, C_{21}, C_{22} \in \B(\H)$. 
\end{thm}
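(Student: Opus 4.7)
The plan is to derive the expansion by applying the Jensen--Nenciu symmetrised inversion lemma iteratively, with two stages of inversion governed by the two nested projections $Q_2\leq Q_1$. The starting point is the small-$k$ expansion of the free resolvent kernel in four dimensions, which comes from the standard expansion of the Macdonald function $K_1$:
\begin{align*}
R_0(-k^2)(x,y) &= \frac{k\, K_1(k|x-y|)}{4\pi^2|x-y|} = \frac{1}{4\pi^2|x-y|^2} + k^2\bigl(\alpha_1+\alpha_2\ln k\bigr)\mathbf 1(x,y) + k^2 G_2(x,y) + O(k^4\ln k).
\end{align*}
Conjugating with $v$ (and using $\rho>12$ to get enough decay that all kernels land in $\B(\H)$), I obtain
\begin{align*}
M(k) &= M_0 + k^2\bigl(\alpha_1+\alpha_2\ln k\bigr)\ketbra{v}{v} + k^2 vG_2 v + O(k^4\ln k),\qquad M_0:=U+vG_0v.
\end{align*}

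The first inversion step is to recognise that $M_0$ is Fredholm of index $0$ (since $vG_0v$ is compact) but generally not invertible. Let $Q_1$ be the orthogonal projection onto $\ker M_0$; this kernel is finite-dimensional under our decay assumption. The inversion lemma (see \cite[Lemma 2.1]{jensen01}) reduces inverting $M(k)$ to inverting its Schur complement $m(k)=Q_1\bigl(M(k)+M(k)(1-Q_1)\overline M(k)^{-1}(1-Q_1)M(k)\bigr)Q_1$ on $Q_1\H$, where $\overline M(k):=(1-Q_1)M(k)(1-Q_1)$ is invertible for small $k$. Expanding $m(k)$ using the display above, and using the identity $M_0 Q_1=Q_1 M_0=0$, the order-$k^0$ terms cancel and one finds
\begin{align*}
m(k) &= k^2 \bigl(\alpha_1+\alpha_2\ln k\bigr) Q_1\ketbra{v}{v}Q_1 + k^2\, T + O(k^4\ln k),
\end{align*}
where $T$ is a bounded self-adjoint operator on $Q_1\H$ built from $v G_2 v$ and the Schur correction.

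The second inversion stage is applied to $k^{-2}m(k)$. Let $Q_2\leq Q_1$ be the projection onto the kernel of $T$ in $Q_1\H$; this is exactly the $L^2$ zero-eigenspace of $H$, and $T_1:=Q_1-Q_2$ is the (at most rank-one in $n=4$) resonance projection. Apply the inversion lemma a second time with $Q_2$ in the role of the kernel projection: the part of $k^{-2}m(k)$ off $Q_2$ is invertible, giving contributions of the form $h(k)A_1$, $\eta^{-1}h(k)A_2$ and $\eta^{-1}Q_1 A_3 Q_1$ from lower-order coefficients in the expansion of $m(k)$. The logarithmic singularity is trapped in the Schur complement restricted to $T_1\H$, which is a scalar (rank-one) multiple of $\alpha_1+\alpha_2\ln k$ plus subleading terms; inverting it produces the factor $h(k)=(c_1+\eta^{-1}c_2)^{-1}$. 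Combining all pieces and using the standard Schur-complement algebra to assemble the off-diagonal blocks between $T_1$ and $Q_2$ yields the stated shape
\begin{align*}
\tilde T_1 &= T_1 - T_1 C_{12} Q_2 - Q_2 C_{21} T_1 + Q_2 C_{22} Q_2,
\end{align*}
for $C_{ij}\in\B(\H)$ determined by $\overline M(k)^{-1}$ evaluated at $k=0$ and the subleading coefficients. The $k^{-2}D_2$ term is the Jensen--Nenciu ``pseudo-inverse'' contribution from the eigenvalue block.

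The main obstacle I expect is bookkeeping: four dimensions is the borderline case where the $k^2\ln k$ term in the resolvent expansion forces one to keep logarithmic factors through both inversions simultaneously, so one must carefully distinguish the $\eta^{-1}$-corrections from the genuinely bounded remainder $\tilde R(k)$. The algebraic structure is then a consequence of the two nested Schur complements, but verifying that all ``mixed'' terms organise into precisely the expression for $\tilde T_1$ requires a careful accounting of which orders of $k$ and $\eta$ each pairing contributes.
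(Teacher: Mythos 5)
The paper offers no proof of Theorem \ref{thm:Mk-expansion}: it is quoted verbatim from \cite[Proposition 5.3]{erdogan14}, whose proof is precisely the two-stage Jensen--Nenciu inversion you outline, so your overall architecture matches the cited source. However, your second stage contains a genuine error. After the first Feshbach step (where, incidentally, the Schur complement should carry a minus sign, $m(k)=Q_1M(k)Q_1-Q_1M(k)(1-Q_1)\overline M(k)^{-1}(1-Q_1)M(k)Q_1$), you arrive at $k^{-2}m(k)=(\alpha_1+\alpha_2\ln k)\,Q_1\ketbra{v}{v}Q_1+T+o(1)$ and then define $Q_2$ as the projection onto $\ker T$ in $Q_1\H$. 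That is the wrong projection: the singular direction as $k\to0$ is dictated by the logarithmically divergent rank-one term, so $Q_2$ must be the kernel of $Q_1\ketbra{v}{v}Q_1$ inside $Q_1\H$, i.e.\ $Q_2\H=\{\phi\in Q_1\H:\langle v,\phi\rangle=0\}$, with $T_1=Q_1-Q_2$ the projection onto $\mathrm{span}\{Q_1v\}$. This is the actual reason $T_1$ is at most rank one (the log coefficient is rank one); with your definition of $Q_2$ the rank-one claim has no justification. It is also why $Q_2\H$ is identified with the zero-eigenspace: $\psi=-G_0v\phi$ decays like $|x|^{-2}$, which in $\R^4$ just fails to be square-integrable, and the orthogonality $\int v\phi=0$ upgrades the decay to give $\psi\in L^2$. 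Your assertion that this space ``is exactly the $L^2$ zero-eigenspace of $H$'' is true for the correct $Q_2$, not for $\ker T$.

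With the correct $Q_2$, the nontrivial analytic input of stage two is missing from your sketch: one must prove that $Q_2TQ_2$ is invertible on $Q_2\H$. In \cite{erdogan14} this rests on a positivity identity on the eigenspace of the form $\langle vG_2v\,\phi,\phi\rangle=\pm\|G_0v\phi\|_{L^2}^2=\pm\|\psi\|_{L^2}^2\neq0$ (up to sign conventions, using that the $k^2$-coefficient of the resolvent expansion is essentially $-G_0^2$). This invertibility is exactly what produces the $k^{-2}D_2$ term and what allows the block inversion with respect to $T_1\oplus Q_2$ to run at all: the diagonal $T_1$-block, $c_2\eta^{-1}+c_1+o(1)$, yields $h(k)$ (note $c_2\neq0$ is itself a lemma, cf.\ Equation \eqref{eq:c2-value}), and the off-diagonal blocks assemble into $\tilde T_1=T_1-T_1C_{12}Q_2-Q_2C_{21}T_1+Q_2C_{22}Q_2$. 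With $Q_2=\ker T$ the complementary block is not uniformly invertible for small $k$, since the logarithmic divergence is not confined to its complement, and the scheme breaks down. In short: right route, same as the source the paper cites, but the stage-two projection and the accompanying invertibility lemma must be corrected before this is a proof.
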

The projection $T_1$ is related to the existence of resonances. For a non-zero resonance $\psi$, the projection $T_1$ is given by $T_1 = \langle \cdot, Uv\psi \rangle Uv\psi$. The value of the constant $c_1$ is not important for our analysis, whilst we have
\begin{align}\label{eq:c2-value}
c_2 &= - \frac{\left( \langle v, Uv \psi \rangle\right)^{2}}{ (8 \pi^2)},
\end{align}
see for example \cite[Lemma 3.2.36]{alexander23} and \cite[Equation 2.2]{yajima22}. To complete the analysis, we also need the small energy behaviour of $\Gamma_0(\lambda)v$, which we can obtain from \cite[Lemma 2.12]{AR23}.

\begin{lemma}\label{lem:gamma-expansion}
Suppose that $V$ satisfies \eqref{ass11} for some $\rho > 12$. Then in $\B(\H, \mathcal{P})$ we have the expansion
\begin{align*}
\Gamma_0(\lambda) v&= \lambda^\frac12 \gamma_0 v + \lambda \gamma_1 v + O(\lambda^\frac{3}{2})
\end{align*}
as $\lambda \to 0$ in $\B(\H, \mathcal{P})$, where the operators $\gamma_0v, \gamma_1 v\in \B(\H,\mathcal{P})$ are given for $f \in \H$ by
\begin{align*}
[\gamma_j  vf](\omega) &= 2^{-\frac12}(2\pi)^{-2} \int_{\R^4}{(-i \langle x,\omega \rangle )^j v(x) f(x)\, \d x}.
\end{align*}
\end{lemma}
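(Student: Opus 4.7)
The plan is to prove the expansion directly from the definition of $\Gamma_0(\lambda)$ by Taylor expanding the Fourier kernel in the spectral parameter. Starting from
\begin{align*}
[\Gamma_0(\lambda) v f](\omega) &= 2^{-\frac12} \lambda^{\frac12} (2\pi)^{-2} \int_{\R^4} e^{-i\lambda^{1/2} \langle x,\omega\rangle} v(x) f(x)\, \d x,
\end{align*}
I would expand $e^{-i\lambda^{1/2}\langle x,\omega\rangle} = 1 - i\lambda^{1/2}\langle x,\omega\rangle + r(\lambda,x,\omega)$ to first order in $\lambda^{1/2}$. The constant term, multiplied by the prefactor $2^{-1/2}\lambda^{1/2}(2\pi)^{-2}$, reproduces $\lambda^{1/2}\gamma_0 v$, and the linear term reproduces $\lambda \gamma_1 v$, matching exactly the formulas stated in the lemma.

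The substantive step is to bound the remainder in the operator norm of $\B(\H,\mathcal{P})$. By Taylor's theorem with integral remainder,
\begin{align*}
|r(\lambda,x,\omega)| \leq \tfrac{1}{2} \lambda |\langle x,\omega\rangle|^2 \leq \tfrac{1}{2} \lambda |x|^2
\end{align*}
uniformly in $\omega \in \Sf^3$. Inserting this into the integral, the remainder operator $R(\lambda)$ satisfies
\begin{align*}
|[R(\lambda)f](\omega)| &\leq C \lambda^{3/2} \int_{\R^4} |x|^2 v(x) |f(x)|\, \d x \leq C' \lambda^{3/2} \|f\|_\H \left(\int_{\R^4} |x|^4 |V(x)|\, \d x \right)^{\!1/2},
\end{align*}
by Cauchy--Schwarz. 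The weighted integral is finite since $|V(x)|\leq C(1+|x|)^{-\rho}$ with $\rho > 12 \gg 8$, and the pointwise bound is uniform in $\omega$, so integrating over $\Sf^3$ yields $\|R(\lambda)\|_{\B(\H,\mathcal{P})} = O(\lambda^{3/2})$.

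Along the way one should verify that $\gamma_0 v$ and $\gamma_1 v$ are themselves bounded from $\H$ to $\mathcal{P}$; this follows from the same Cauchy--Schwarz estimate applied to $\int|x|^{2j}|V(x)|\,\d x$ for $j=0,1$, which is finite well before the full strength $\rho > 12$ is invoked. There is no real obstacle here: the statement is essentially the four-dimensional analogue of \cite[Lemma 2.12]{AR23}, and the standing decay hypothesis on $V$ provides substantial slack in every estimate required.
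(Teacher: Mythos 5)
Your proof is correct: the Taylor expansion $e^{-i\lambda^{1/2}\langle x,\omega\rangle} = 1 - i\lambda^{1/2}\langle x,\omega\rangle + r$ with $|r| \leq \tfrac12\lambda|x|^2$, the Cauchy--Schwarz bound against $\left(\int_{\R^4}|x|^4|V(x)|\,\d x\right)^{1/2}$ (finite since $\rho > 8$ suffices at infinity in $\R^4$), and the uniformity in $\omega \in \Sf^3$ give exactly the claimed $O(\lambda^{3/2})$ bound in $\B(\H,\mathcal{P})$. The paper offers no argument of its own here --- it cites \cite[Lemma 2.12]{AR23} outright --- and that cited result rests on precisely this expansion-with-remainder argument, so your proposal is a correct, self-contained rendering of the same approach.
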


We can combine Theorem \ref{thm:Mk-expansion} and Lemma \ref{lem:gamma-expansion} to obtain an expansion of a related operator which will be useful in the next section. We use the convention that $\lambda > 0$, $k = -i \lambda^\frac12$ and $\eta = \left(\frac{\ln{(\lambda)}}{2} - i \frac{\pi}{2}\right)^{-1}$.

\begin{lemma}\label{lem:new-expansion}
Suppose $V$ satisfies the assumption \eqref{ass11} for some $\rho > 12$. Then as $\lambda \to 0$ we have the expansion
\begin{align*}
\left(U+v R_0(\lambda+i0) v \right)^{-1} v \Gamma_0(\lambda)^* &= \lambda^{-\frac12} h(\lambda^\frac12) (T_1 - Q_2 C_{21} T_1) v \gamma_0^* + D_2 v \gamma_1^* + O(\eta)
\end{align*}
in $\B(\mathcal{P}, \H)$.
\end{lemma}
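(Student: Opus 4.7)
The strategy is to substitute the expansion of $M(k)^{-1} = (U + vR_0(\lambda+i0)v)^{-1}$ from Theorem \ref{thm:Mk-expansion} (evaluated at $k = -i\lambda^{1/2}$, so $k^{-2}$ contributes a $\lambda^{-1}$ prefactor) and the adjoint of Lemma \ref{lem:gamma-expansion},
\[
v\Gamma_0(\lambda)^* = \lambda^{1/2} v\gamma_0^* + \lambda v\gamma_1^* + O(\lambda^{3/2}) \qquad \text{in } \B(\mathcal{P}, \H),
\]
into the product $M(k)^{-1} v\Gamma_0(\lambda)^*$, and then collect the terms by their orders in $\lambda$ and in the logarithmic variable $\eta$. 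All arithmetic is then routine; the non-trivial content is the collapse to only two surviving terms.

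Several orthogonality relations cause almost all of the cross terms to vanish. First, because $T_1 = Q_1 - Q_2$ and $Q_1 Q_2 = Q_2 Q_1 = Q_2$ (consequence of $Q_1 \geq Q_2$), a direct calculation gives $Q_1 \tilde T_1 Q_1 = \tilde T_1$. Second, the explicit formula for $\gamma_0 v$ in Lemma \ref{lem:gamma-expansion} shows that $\gamma_0 v f$ is a constant function on $\Sf^{3}$, so the range of $v\gamma_0^*$ lies in $\C \cdot v \subset \H$. Third, the Jensen--Nenciu reduction used in \cite{erdogan14} defines $Q_2$ to have range spanned by vectors $Uv\psi_j$ where the $\psi_j$ are zero-energy $L^2$-eigenfunctions of $H$, and in dimension four the condition $\psi_j \in L^2$ (as opposed to merely being a resonance) forces the orthogonality $\langle v, Uv\psi_j\rangle = 0$. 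Hence $Q_2 v = 0$ as an element of $\H$, and since $D_2$ is constructed inside the range of $Q_2$ in the same reduction, $D_2 v = 0$ as well. Combining these gives $Q_2 v\gamma_0^* = 0$ and $D_2 v\gamma_0^* = 0$ in $\B(\mathcal{P},\H)$.

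With these reductions, only the $k^{-2} D_2$ and $k^{-2} h(k) Q_1 \tilde T_1 Q_1$ terms in the expansion of $M(k)^{-1}$ produce non-error contributions. In the former, the leading $\lambda^{1/2} v\gamma_0^*$ piece is annihilated by $D_2 v\gamma_0^* = 0$, and the next piece combines with the $k^{-2}$ prefactor to give $D_2 v\gamma_1^*$ (up to a sign absorbed into the definition of $D_2$). In the latter, writing
\[
\tilde T_1 v\gamma_0^* = T_1 v\gamma_0^* - T_1 C_{12} Q_2 v\gamma_0^* - Q_2 C_{21} T_1 v\gamma_0^* + Q_2 C_{22} Q_2 v\gamma_0^*
\]
and discarding the two summands that end with $Q_2$ acting on $v\gamma_0^*$ yields $(T_1 - Q_2 C_{21} T_1) v\gamma_0^*$; multiplication by the leading $\lambda^{1/2}$ from $v\Gamma_0(\lambda)^*$ and the $k^{-2}h(k)$ prefactor produces the claimed $\lambda^{-1/2} h(\lambda^{1/2})(T_1 - Q_2 C_{21} T_1) v\gamma_0^*$.

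All remaining contributions from Theorem \ref{thm:Mk-expansion}, together with the $O(\lambda^{3/2})$ tail of $v\Gamma_0(\lambda)^*$, must be shown to be of size $O(\eta)$. The main point requiring care is that $\eta^{-1} Q_1 A_3 Q_1 \cdot \lambda^{1/2} v\gamma_0^*$ has size $\lambda^{1/2}|\ln\lambda|$, which is absorbed into $O(\eta) = O(|\ln\lambda|^{-1})$ because $\lambda^{1/2}$ decays faster than any negative power of $|\ln\lambda|$; the other terms ($h(k)A_1$, $\eta^{-1}h(k)A_2$, $\tilde R(k)$ acting on $O(\lambda^{1/2})$) are more straightforwardly $O(\eta)$. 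The main obstacle throughout is the bookkeeping of the logarithmic $\eta$-scale against the power-of-$\lambda$ scale, together with justifying the orthogonalities $Q_2 v = D_2 v = 0$ from the Jensen--Nenciu construction; once these are granted, the identity follows by collecting terms.
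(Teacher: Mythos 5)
Your proof is correct and follows essentially the same route as the paper: multiply the expansions of Theorem \ref{thm:Mk-expansion} and Lemma \ref{lem:gamma-expansion} and eliminate the unwanted terms via the orthogonality $\gamma_0 v Q_2 = 0 = Q_2 v \gamma_0^*$, which the paper simply cites from \cite[Lemma 7.2]{erdogan14} rather than re-deriving from the $L^2$-eigenfunction characterisation as you do. Your additional observations --- that $D_2$ is supported in the range of $Q_2$, so that $D_2 v\gamma_0^* = 0$ and no $\lambda^{-\frac12} D_2 v \gamma_0^*$ term survives, and the explicit bookkeeping showing terms like $\eta^{-1}\lambda^{\frac12}$ are $O(\eta)$ --- correctly make explicit steps that the paper's one-line proof leaves implicit.
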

\begin{proof}
The identity $\gamma_0 v Q_2 = 0 = Q_2 v \gamma_0^*$ \cite[Lemma 7.2]{erdogan14} yields $\tilde{T}_1 v \gamma_0^* = (T_1 - Q_2 C_{21} T_1) v\gamma_0^*$. Observing that as $\lambda \to 0$, $\eta \to 0$ and $h(\lambda^\frac12) \to 0$, we multiply the expansions of Theorem \ref{thm:Mk-expansion} and Lemma \ref{lem:gamma-expansion} to obtain the statement.
\end{proof}

We summarise below some useful properties of the scattering matrix \cite[Proposition 1.8.1 and Proposition 8.1.9]{yafaev10}, \cite[Theorem 2.15 and Corollary 3.10]{AR23}.
\begin{thm}\label{thm: stationary scattering operator}
Suppose that $V$ satisfies the assumption \eqref{ass11} for some $\rho > 1$. The scattering matrix $S(\lambda)$ is given for all $\lambda \in \R^+$ by the equation
\begin{align}\label{eq: scat matrix defn}
S(\lambda) &= \textup{Id} - 2\pi i \Gamma_0(\lambda)v\left(U+vR_0(\lambda + i0) v\right)^{-1} v \Gamma_0(\lambda)^*.
\end{align}
For each $\lambda\in\R^+$, the operator $S(\lambda)$ is unitary in $\mathcal{P}=L^2(\Sf^{3})$ and depends continuously (in the sense of norm) on $\lambda \in \R^+$. Furthermore, if $\rho > 12$ then $S(\lambda)-\textup{Id} \in \mathcal{L}^1(\H)$, is differentiable in the norm of $\mathcal{L}^1(\H)$ and we have $S(0) = \textup{Id}$ and $\lim_{\lambda \to \infty}{S(\lambda)} = \textup{Id}$, where the limit is taken in $\B(\mathcal{P})$.
\end{thm}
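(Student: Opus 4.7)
The plan is to establish each claim in turn, leaning on stationary scattering theory and the specific decay hypothesis $\rho>12$ when needed.

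First I would derive the representation \eqref{eq: scat matrix defn}. Since the wave operators exist and are asymptotically complete for $\rho>1$, the scattering operator $S=W_+^*W_-$ is unitary on $\H$, and standard stationary computations (as in \cite[Sec.~5--7]{yafaev10}) applied to \eqref{eq:statwaveop} produce the $T$-operator identity
\[
T(\lambda)=V-V R(\lambda+i0) V = v(U+vR_0(\lambda+i0)v)^{-1}v,
\]
where the second equality comes from the resolvent identity combined with the symmetric factorisation $V=vUv$ and the fact that $(U+vR_0(\lambda+i0)v)^{-1}$ exists for $\lambda>0$. Sandwiching by $\Gamma_0(\lambda)$ gives the formula, and unitarity of $S(\lambda)$ on $\mathcal{P}$ follows from unitarity of $S$ together with Lemma \ref{lem:eff-emm} diagonalising $H_0$ as multiplication by $L$.

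Next, the norm continuity in $\lambda\in\R^+$. I would argue that $\lambda\mapsto vR_0(\lambda\pm i0)v$ is norm continuous in $\B(\H)$ by the limiting absorption principle \cite[Thm.~4.2]{agmon75}, hence so is its inverse; and that $\lambda\mapsto \Gamma_0(\lambda)v$ is norm continuous in $\B(\H,\mathcal{P})$ since the trace map is continuous on $H^{s,t}$ with $s>\tfrac12$, $t>\tfrac12$, and $v$ supplies enough decay when $\rho>1$. The product of three norm-continuous pieces is norm continuous.

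For the trace-class statement, which is the main technical obstacle, I would factorise
\[
S(\lambda)-\textup{Id}=-2\pi i\bigl(\Gamma_0(\lambda)v\bigr)\,\bigl(U+vR_0(\lambda+i0)v\bigr)^{-1}\,\bigl(\Gamma_0(\lambda)v\bigr)^*.
\]
Under $\rho>12$, the operator $\Gamma_0(\lambda)v$ has a smooth Schwartz kernel decaying like $(1+|x|)^{-\rho/2}$, and iterating integration by parts on the oscillatory integral defining $\Gamma_0(\lambda)$ shows that $\Gamma_0(\lambda)v\in\mathcal{L}^p(\H,\mathcal{P})$ for some $p<2$; hence the sandwich lies in $\mathcal{L}^1$. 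Differentiability in $\mathcal{L}^1$-norm then reduces to differentiability of each factor: $\partial_\lambda\Gamma_0(\lambda)v$ admits the same Schatten estimates (only the kernel is multiplied by polynomial factors in $x$, absorbed by the strong decay of $v$), and $\partial_\lambda(U+vR_0(\lambda+i0)v)^{-1}$ exists in $\B(\H)$ by the differentiability of the boundary value of the resolvent under $\rho>12$. This is the step requiring the full strength of the decay hypothesis and which I expect to be the principal obstacle; I would follow the blueprint of \cite[Thm.~2.15]{AR23} to extract the needed bounds.

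Finally, the limits at $0$ and $\infty$. As $\lambda\to\infty$ we have $vR_0(\lambda\pm i0)v\to 0$ in $\B(\H)$ \cite[Prop.~7.1.2]{yafaev10}, so $(U+vR_0(\lambda+i0)v)^{-1}\to U$, while $\|\Gamma_0(\lambda)v\|\to 0$ by the decay of the Fourier transform of $vf$ evaluated at radius $\lambda^{1/2}$, so $S(\lambda)\to\textup{Id}$ in $\B(\mathcal{P})$. As $\lambda\to 0$, substituting Lemma \ref{lem:new-expansion} and Lemma \ref{lem:gamma-expansion} into \eqref{eq: scat matrix defn} gives
\[
S(\lambda)-\textup{Id}=-2\pi i\,(\lambda^{1/2}\gamma_0 v+O(\lambda))\bigl(\lambda^{-1/2}h(\lambda^{1/2})(T_1-Q_2C_{21}T_1)v\gamma_0^*+D_2v\gamma_1^*+O(\eta)\bigr),
\]
and the leading term is $-2\pi i\,h(\lambda^{1/2})\gamma_0 v(T_1-Q_2C_{21}T_1)v\gamma_0^*$, which vanishes because $h(\lambda^{1/2})\to 0$ as $\lambda\to 0$. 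All remaining contributions are $O(\lambda^{1/2})$ or smaller, so $S(\lambda)\to\textup{Id}$ at $\lambda=0$.
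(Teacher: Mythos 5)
The paper itself contains no proof of this theorem: it is a summary statement, with the representation \eqref{eq: scat matrix defn}, unitarity and norm continuity taken from \cite[Propositions 1.8.1 and 8.1.9]{yafaev10}, and the trace-class, differentiability and endpoint claims from \cite[Theorem 2.15 and Corollary 3.10]{AR23}. Your sketch reconstructs exactly the standard stationary-theory route those references use: the $T$-operator identity $V-VR(\lambda+i0)V=v\left(U+vR_0(\lambda+i0)v\right)^{-1}v$ via $V=vUv$ and $U-UvR(\lambda\pm i0)vU=(U+vR_0(\lambda\pm i0)v)^{-1}$, continuity via the limiting absorption principle, and the Hilbert--Schmidt sandwich (here $\Gamma_0(\lambda)v\in\mathcal{L}^2$ already for $\rho>4$, so your $\mathcal{L}^p$, $p<2$, refinement is more than is needed) for the $\mathcal{L}^1$ claims. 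One place where your argument is genuinely needed rather than optional: the cited results of \cite{AR23} were established under the hypothesis that no resonances occur in dimension four, so in the present resonant setting the claim $S(0)=\textup{Id}$ does require precisely your substitution of Lemmas \ref{lem:gamma-expansion} and \ref{lem:new-expansion} into \eqref{eq: scat matrix defn}, where the resonant term survives only as $O\big(h(\lambda^{1/2})\big)=O\big(1/|\ln\lambda|\big)$ and hence vanishes, albeit at a logarithmic rate. That part of your proposal is correct and is the right way to handle the resonant case.

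Two justifications need tightening, though neither threatens the structure. First, your reason for $\|\Gamma_0(\lambda)v\|\to0$ as $\lambda\to\infty$ --- ``decay of the Fourier transform of $vf$ evaluated at radius $\lambda^{1/2}$'' --- does not work as written: Riemann--Lebesgue gives pointwise decay of $\widehat{vf}$ for each fixed $f$, but nothing uniform over the unit ball of $\H$, and the prefactor $\lambda^{1/2}$ in $\Gamma_0(\lambda)$ grows. The clean argument combines the identity $\pi\,\Gamma_0(\lambda)^*\Gamma_0(\lambda)=\Imag R_0(\lambda+i0)$ with the high-energy limiting absorption bound $\|\langle x\rangle^{-s}R_0(\lambda+i0)\langle x\rangle^{-s}\|=O(\lambda^{-1/2})$ for $s>\frac12$, giving $\|\Gamma_0(\lambda)v\|=O(\lambda^{-1/4})$. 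Second, you use throughout, but do not justify, invertibility of $U+vR_0(\lambda+i0)v$ for every $\lambda>0$; this rests on the absence of positive eigenvalues of $H$ \cite{kato59} together with compactness of $vR_0(\lambda+i0)v$ and Fredholm theory. Relatedly, deducing unitarity of $S(\lambda)$ from unitarity of $S$ and the diagonalisation only gives it for almost every $\lambda$; you need the continuity statement (or a direct computation from \eqref{eq: scat matrix defn} using the displayed identity for $\Imag R_0(\lambda+i0)$) to upgrade this to every $\lambda\in\R^+$.
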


%%%%%%%%%%%%%%%%%%%%%%%%%%%%%%%%%%%%%%%%%%%%%%%%%%%%%%%%%%%%%%%%%%%%%
%%%%%%%%%%%%%%%%%%%%%%%%%%%%%%%%%%%%%%%%%%%%%%%%%%%%%%%%%%%%%%%%%%%%%
%%%%%%%%%%%%%%%%%%%%%%%%%%%%%%%%%%%%%%%%%%%%%%%%%%%%%%%%%%%%%%%%%%%%%

\section{The form of the wave operator}
\label{sec:wave-op}

In this section, we analyse the wave operator $W_-$ in the space $\H_{spec}$ (the spectral representation for $H_0$). Much of the analysis is identical to the two dimensional case in \cite{ANRR}, to which we refer for many details. The main result in this section is the following.

\begin{thm}\label{thm:main-wave}
Suppose that $V$ satisfies the estimate \eqref{ass11} for some $\rho > 12$. Then the wave operator $W_-$ is given by
\begin{align}
W_- &= \textup{Id}+ \vp(D_4)(S-\textup{Id}) + B_{res} + K,
\end{align}
where $\vp:\R \to \C$ is given by
\begin{align*}
\vp(x) &= \frac12 \left( 1+\tanh{(\pi x)} -i \cosh{(\pi x)}^{-1} \right),
\end{align*}
$B_{res}$ is a bounded operator depending on the existence of resonances and $K$ is a compact operator.
\end{thm}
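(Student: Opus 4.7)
The plan is to work from the stationary representation of the wave operator in Equation \eqref{eq:statwaveop}, conjugate to the spectral representation via $F_0$, and then identify the resulting integral operator term-by-term using the expansion in Theorem \ref{thm:Mk-expansion} and its consequence Lemma \ref{lem:new-expansion}. The starting point is the identity
\[
R(\lambda \pm i\eps) = R_0(\lambda \pm i\eps) - R_0(\lambda \pm i\eps) v (U + vR_0(\lambda \pm i\eps)v)^{-1} v R_0(\lambda \pm i\eps),
\]
which after substitution into \eqref{eq:statwaveop} and taking $\eps\to 0$ expresses $W_- - \textup{Id}$ in $\H_{spec}$ as an integral operator with kernel proportional to
\[
\frac{1}{\lambda - \mu - i0}\,\Gamma_0(\lambda)\,v\,(U + vR_0(\mu + i0) v)^{-1}\,v\,\Gamma_0(\mu)^*,
\]
acting between fibres at spectral parameters $\mu$ and $\lambda$. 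This is the four-dimensional analogue of the starting identity in \cite[Section 3]{ANRR}.

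The next step is the radial change of variable $\lambda = \mathrm{e}^{2s}$, which converts the multiplicative dilation in the resolvent formula into translation and identifies the Cauchy-type kernel $(\lambda - \mu - i0)^{-1}$ with the Fourier kernel of the function $\varphi$ of \eqref{eq:vp-defn} acting on the conjugate variable. Under this substitution, the regular part of the operator $(U + vR_0(\mu + i0) v)^{-1}$ combines with the factors $\Gamma_0(\lambda)$, $\Gamma_0(\mu)^*$ and the Cauchy kernel to produce exactly $\varphi(D_+)$ times a $T$-matrix contribution. The $D_+$ operator then lifts to $\vp(D_4)$ via the intertwining $F_0 D_4 F_0^* = D_+ \otimes \textup{Id}_{\mathcal{P}}$ (see \cite[Section 2]{jensen81}), and the definition \eqref{eq: scat matrix defn} of $S(\lambda)$ converts the $T$-matrix into $S - \textup{Id}$; this yields the leading term $\varphi(D_4)(S-\textup{Id})$, exactly as in \cite[Section 3]{ANRR} for $n=2$.

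The main obstacle—and what is genuinely new relative to the non-resonant case—is to show that the singular contribution near $\lambda = 0$ arising from the resonant pole $k^{-2}\,h(k)\,Q_1 \tilde T_1 Q_1$ in $M(k)^{-1}$ assembles into a \emph{bounded} operator $B_{res}$. Using Lemma \ref{lem:new-expansion}, the leading singular part of the integrand factors as
\[
\lambda^{-\tfrac12} h(\lambda^{\tfrac12})\,\bigl(\gamma_0 v\bigr)^*\, (T_1 - Q_2 C_{21} T_1)\, v\gamma_0^*,
\]
which, after the substitution $s = \tfrac12 \ln\lambda$ and similarly for $\mu$, becomes a rank-at-most-one operator in the angular variables multiplied by a function of $s$ whose decay is logarithmic ($h(\mathrm{e}^s) = O(1/s)$). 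The crucial estimate is to show that convolution of this logarithmic kernel against the Fourier transform of $\vp$ produces a bounded operator on $\H_{spec}$—this is the four-dimensional analogue of \cite[Lemma 4.3]{ANRR}, and the decay coming from $h$ is precisely enough to close the argument. The explicit rank-one structure in the angular direction, built out of $\gamma_0 v\psi$ for a resonance $\psi$ (together with the constant $c_2$ from \eqref{eq:c2-value}), determines $B_{res}$.

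Finally, the contributions of the remaining terms $h(k) A_1$, $\eta^{-1} h(k) A_2$, $\eta^{-1} Q_1 A_3 Q_1$, and $\tilde R(k)$ in the expansion of $M(k)^{-1}$, together with the $\lambda$-correction $\gamma_1 v$ to $\Gamma_0(\lambda) v$ from Lemma \ref{lem:gamma-expansion}, are shown to produce compact operators by standard Schur-type estimates, using the trace-class differentiability of $S(\lambda) - \textup{Id}$ from Theorem \ref{thm: stationary scattering operator} to control the large-$\lambda$ behaviour and the sufficient decay $\rho > 12$ to control the small-$\lambda$ behaviour. Collecting these contributions gives the stated decomposition with $B_{res}$ bounded and $K$ compact.
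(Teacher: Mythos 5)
Your skeleton matches the paper's: both start from the stationary representation \eqref{eq:stat-wave-op}, peel off the non-resonant part as $\vp(D_4)(S-\textup{Id})$ plus compacts by the arguments of \cite{AR23, ANRR}, and isolate the singular contribution of $k^{-2}h(k)Q_1\tilde{T}_1Q_1$ via Lemma \ref{lem:new-expansion}. The gap is in the one step that is genuinely new, the boundedness and identification of $B_{res}$, where your proposed mechanism would fail. The paper does \emph{not} pair the resonant singularity with the Cauchy kernel and hence with $\vp$: in the decomposition $R_0,\dots,R_5$ the resonant term sits in $R_1$, where the factor $\mu^{-\frac12}(\mu^{\frac12}-\lambda^{\frac12})(\mu-\lambda-i\eps)^{-1}=\mu^{-\frac12}(\mu^{\frac12}+\lambda^{\frac12})^{-1}$ has \emph{cancelled} the Cauchy singularity, so the Fourier transform of $\vp$ never touches this term. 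What remains is $B_{res}=N\Xi B$ (Lemmas \ref{lem:B-N-properties} and \ref{lem:B-res}) with $\Xi$ the kernel \eqref{eq:Xi-defn}. After the substitution $\lambda=\e^{-2x}$ this kernel is not a convolution at all: the factor $1/\ln\lambda$ becomes the multiplication operator $-\frac{1}{2X}$, and the kernel is of one-sided Hardy type, roughly $-\theta(y-x)\frac{1}{2y}$ on the relevant region. Its boundedness is a Hardy-inequality phenomenon, made precise in Lemma \ref{lem:Xi-rep} by identifying $\mathcal{U}\Xi\mathcal{U}^*$, modulo compacts, with $-\chi_0(\e^{-2X})\frac{2}{1+2iD_1}\chi_0(\e^{-2X})$, i.e.\ a function of the self-adjoint generator $D_1$ with bounded symbol. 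Your "convolution of the logarithmic kernel against $\widehat{\vp}$, with the $O(1/s)$ decay of $h(\e^{s})$ enough to close" cannot work as stated: $1/s$ is not integrable, so no Young/Schur-type convolution estimate closes, and the function of the dilation generator that actually appears is $s\mapsto\frac{2}{1+2is}$, not $\vp$ --- this distinction matters later, since it is precisely this symbol that produces the winding number $\dim(P_s)$ on the edge $\Gamma_4$ of the hexagon.

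Two further points. First, your factorisation of the singular integrand drops the left factor $\Gamma_0(\mu)v\approx\mu^{\frac12}\gamma_0 v$ (you wrote $(\gamma_0 v)^*$ with no $\mu$-dependence); the ratio $\mu^{\frac12}/\lambda^{\frac12}$ is exactly what converts the antisymmetric kernel $(2\sinh(x-y))^{-1}$ into the one-sided Heaviside-type kernel, so without it you cannot identify the operator correctly, and indeed $B_{res}=N\Xi B$ is not rank one in the angular variable: the rank-one computation $N(0)B(0)=-P_s$ (Lemma \ref{lem:N-B-zero}) only enters in Section \ref{sec:decoupling} for the index, and is neither needed for nor sufficient to prove Theorem \ref{thm:main-wave}. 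Second, the compactness of the remaining terms is not obtained by "standard Schur-type estimates": the terms $R_2$ and $R_4$ still carry the genuinely singular kernel $(\mu-\lambda-i\eps)^{-1}$, which regenerates on-diagonal (multiplication-type) contributions as $\eps\to 0$; the paper outsources this to \cite[Corollary 4.2]{ANRR} and \cite[Theorem 3.1]{AR23} via Lemma \ref{lem:many-Rs}, and you would need to invoke or reprove those results rather than a Schur bound. (A minor slip: the intertwining is $F_0 D_4 F_0^*=-2D_+\otimes\textup{Id}_{\mathcal{P}}$, not $D_+\otimes\textup{Id}_{\mathcal{P}}$, which is why rescaled arguments such as $\vp\bigl(-\tfrac12 D\bigr)$ appear after the transformation.)
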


In Lemma \ref{lem:B-res} we will determine an explicit expression for the operator $B_{res}$ in a different representation. We begin our analysis of the wave operator with the stationary representation for the wave operator $W_-$. For suitable $f,g \in \H_{spec}$ we have
\begin{align}\label{eq:stat-wave-op}
&\langle F_0(W_--\textup{Id}) F_0^* f,g \rangle \\
&= - \int_\R{\lim_{\eps \to 0} \left( \int_0^\infty{ \left\langle \Gamma_0(\mu) v \left(U+vR_0(\lambda+i\eps) v \right)^{-1} v F_0^* \delta_\eps(L-\lambda) f,  \frac{g(\mu)}{\mu-\lambda+i\eps} \right\rangle_{\mathcal{P}} \, \d \mu} \right) \, \d \lambda} \nonumber,
\end{align}
where we have defined
\begin{align*}
\delta_\eps(L-\lambda) &:= \frac{\eps}{\pi} (L-\lambda+i\eps)^{-1}(L-\lambda- i \eps)^{-1}.
\end{align*}
Next we write $\Gamma_0(\mu)v=\Gamma_0(\mu)vQ_1+\Gamma_0(\mu)vQ_1^\perp$, which then gives two terms in Equation \eqref{eq:stat-wave-op}. The term with $Q_1^\perp$ has no singularity at zero energy by Lemma \ref{lem:new-expansion} and has been considered in \cite{AR23}. A similar expansion has been considered in great detail in \cite{ANRR} in dimension $n = 2$.

We now carefully analyse Equation \eqref{eq:stat-wave-op}. Fix a function $\chi_1 \in C(\R^+; [0,1])$ with $\chi_1(\lambda) = 0$ for $\lambda < \frac{1}{4}$ and $\chi_1(\lambda) = 1$ for $\lambda > \frac{3}{4}$ and let $\chi_2 = 1- \chi_1$. Let also the function $\chi_0 \in C(\R^+;[0,1])$ be such that $\chi_0(\lambda) = 1$ for $\lambda < \frac{3}{4}$ and $\chi_0(\lambda) = 0$ for $\lambda > \frac{7}{8}$. For $f \in \H_{spec}$ and $\lambda, \eps > 0$ we define
\begin{align*}
\tilde{\psi}_\eps(\lambda) &:= Q_1 \left(U+vR_0(\lambda+i\eps) v \right)^{-1} v F_0^* \delta_\eps(L-\lambda) \chi_2(L) f.
\end{align*}

We make the following decomposition of Equation \eqref{eq:stat-wave-op}. Define
\begin{align*}
&R_0 \\
&= \int_0^\infty{\lim_{\eps \to 0} \left( \int_0^\infty{ \left\langle \frac{\Gamma_0(\mu) v Q_1}{\mu-\lambda-i\eps} Q_1 \left(U+v R_0(\lambda + i \eps) v \right)^{-1} v F_0^* \delta_\eps(L-\lambda) \chi_1(L) f, g(\mu) \right\rangle \, \d \mu} \right) \, \d \lambda}, \\
&R_1 = \int_0^\infty{\lim_{\eps \to 0} \left( \int_0^\infty{ \left\langle \Gamma_0(\mu) v Q_1\chi_2(\mu) \mu^{-\frac12}(\mu^\frac12-\lambda^\frac12) (\mu-\lambda-i\eps)^{-1} v F_0^* \tilde{\psi}_\eps(\lambda), g(\mu) \right\rangle \, \d \mu} \right) \, \d \lambda}, \\
&R_2 = \int_0^\infty{\lim_{\eps \to 0} \left( \int_0^\infty{ \left\langle \Gamma_0(\mu) v Q_1\chi_2(\mu) \mu^{-\frac12}\lambda^\frac12 (\mu-\lambda-i\eps)^{-1} v F_0^* \tilde{\psi}_\eps(\lambda), g(\mu) \right\rangle \, \d \mu} \right) \, \d \lambda}, \\
&R_3 = \int_0^\infty{\lim_{\eps \to 0} \left( \int_0^\infty{ \left\langle \Gamma_0(\mu) v Q_1\chi_2(\mu) \mu^{-1}(\mu-\lambda) (\mu-\lambda-i\eps)^{-1} v F_0^* \tilde{\psi}_\eps(\lambda), g(\mu) \right\rangle \, \d \mu} \right) \, \d \lambda},\\
&R_4 = \int_0^\infty{\lim_{\eps \to 0} \left( \int_0^\infty{ \left\langle \Gamma_0(\mu) v Q_1\chi_2(\mu)\mu^{-1} \lambda (\mu-\lambda-i\eps)^{-1} v F_0^* \tilde{\psi}_\eps(\lambda), g(\mu) \right\rangle \, \d \mu} \right) \, \d \lambda},  \textup{ and }\\
&R_5 = \int_0^\infty{\lim_{\eps \to 0} \left( \int_0^\infty{ \left\langle \Gamma_0(\mu) v Q_1^\perp\chi_2(\mu) \mu^{-1}(\mu-\lambda) (\mu-\lambda-i\eps)^{-1} v F_0^* \tilde{\psi}_\eps(\lambda), g(\mu) \right\rangle \, \d \mu} \right) \, \d \lambda}.
\end{align*}
By construction we have the equality
\begin{align*}
-\langle F_0(W_--\textup{Id})F_0^* f, g \rangle &= R_0+R_1+R_2+R_3+R_4+R_5.
\end{align*}

Combining \cite[Corollary 4.2]{ANRR} and \cite[Theorem 3.1]{AR23} we have the following.

\begin{lemma}\label{lem:many-Rs}
Suppose that $V$ satisfies \eqref{ass11} for some $\rho > 12$. Then we have
\begin{align*}
R_0 + R_2 + R_3 + R_4 + R_5 &= -\langle \left(\textup{Id} + \vp(D_4)(S-\textup{Id}) + K \right) f, g \rangle
\end{align*}
for a compact operator $K$.
\end{lemma}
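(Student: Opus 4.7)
The plan is to mirror the four-dimensional no-resonance argument of \cite[Theorem 3.1]{AR23} and the two-dimensional with-resonance argument of \cite[Corollary 4.2]{ANRR}: the decomposition of the wave operator above was engineered so that the singular piece of $\left(U+vR_0(\lambda+i0)v\right)^{-1} v F_0^*$ identified in Lemma \ref{lem:new-expansion} is quarantined inside $R_1$, while each of $R_0, R_2, R_3, R_4, R_5$ either avoids zero energy or carries an algebraic factor that absorbs the singularity.

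First I would dispose of $R_0$. Since $\chi_1$ is supported in $\{\lambda \geq 1/4\}$, the limiting absorption principle \cite[Theorem 4.2]{agmon75} ensures that $\left(U+vR_0(\lambda+i0)v\right)^{-1}$ is norm continuous and uniformly bounded there, so the singular term from Lemma \ref{lem:new-expansion} never enters. The term $R_0$ is then algebraically identical to the high-energy piece treated in \cite{AR23}, and delivers $-\langle \vp(D_4)(S-\textup{Id})\chi_1(L)f,g\rangle$ modulo a compact operator. For $R_5$, the projection $Q_1^\perp$ annihilates the rank-one and finite-rank pieces in the expansion of Lemma \ref{lem:new-expansion}, leaving only the bounded remainder; the resulting integral is once again that of \cite{AR23}, and contributes the complementary $\chi_2$ piece of $\vp(D_4)(S-\textup{Id})$ modulo compacts.

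For $R_2, R_3$, and $R_4$, the explicit factors $\mu^{-1/2}(\mu^{1/2}-\lambda^{1/2})$, $\mu^{-1/2}\lambda^{1/2}$, $\mu^{-1}(\mu-\lambda)$, and $\mu^{-1}\lambda$ supply H\"older-type regularisation that compensates for the $\lambda^{-1/2}$ singularity of Lemma \ref{lem:new-expansion} near $\lambda = 0$. After passing to the Mellin representation adapted to $D_4$, as in the corresponding passage of \cite[Section 4]{ANRR}, these three terms define Hilbert--Schmidt operators on $L^2(\R^+,\mathcal{P})$ and therefore contribute only to the compact operator $K$.

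The main obstacle I expect is the bookkeeping around the logarithmic prefactor $h(\lambda^{1/2}) = (c_1+\eta^{-1}c_2)^{-1}$: in $R_2, R_3, R_4$ one must verify that the product of $h(\lambda^{1/2})$ with the various $\mu, \lambda$ factors, after the change of variables $\lambda = \e^{2s}\mu$ that realises functions of $D_4$, yields a Hilbert--Schmidt kernel rather than merely a bounded one. This is handled by dominated convergence together with $h(\lambda^{1/2}) \to 0$ as $\lambda \to 0$ and the smoothness of $\chi_2$, exactly as in the two-dimensional calculation. Summing the five contributions then gives the stated identity.
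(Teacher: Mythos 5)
Your high-level route---reducing everything to \cite[Theorem 3.1]{AR23} and \cite[Corollary 4.2]{ANRR}---is exactly what the paper does: its proof of Lemma \ref{lem:many-Rs} is literally the one-line combination of those two citations, and your treatments of $R_0$ (the cutoff $\chi_1$ keeps the singular expansion of Lemma \ref{lem:new-expansion} out of play) and of the vanishing $Q_1^\perp T_1 = 0$ in $R_5$ are correct. The genuine gap is that you have swapped the roles of the difference-weighted and homogeneous-weighted terms. In $R_3$ and $R_5$ the numerator $(\mu-\lambda)$ cancels the singular denominator $(\mu-\lambda-i\eps)^{-1}$, so no Hilbert-transform-type kernel survives; these are precisely the terms disposed of as compact, via H\"older continuity of $\lambda\mapsto\Gamma_0(\lambda)v$ and the expansions, in the arguments of \cite{AR23, ANRR}. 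In particular $R_5$ cannot ``contribute the complementary $\chi_2$ piece of $\vp(D_4)(S-\textup{Id})$'': it is compact. That low-energy piece is instead produced by $R_2$ and $R_4$, whose intact kernels $(\lambda/\mu)^{1/2}(\mu-\lambda-i0)^{-1}$ and $(\lambda/\mu)(\mu-\lambda-i0)^{-1}$ become, under the Mellin transform, bounded \emph{non-compact} functions of the dilation generator with the $\vp$-type limits at $\pm\infty$ (the factors $\lambda^{1/2}$ and $\lambda$ simultaneously absorb the $\lambda^{-\frac12}h(\lambda^{\frac12})$ singularity of Lemma \ref{lem:new-expansion}, which is why these terms are regular at zero energy while remaining singular on the diagonal, and why only $R_1$, with the weight $\mu^{-\frac12}(\mu^{\frac12}-\lambda^{\frac12})$ matching the kernel $\Xi$, retains a resonance contribution).

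Two checks show your assignment cannot be repaired as stated. First, $R_2$ and $R_4$ are not Hilbert--Schmidt: their kernels fail to be square-integrable across the diagonal $\mu=\lambda$, so dominated convergence plus $h(\lambda^{\frac12})\to 0$ cannot yield the compactness you claim. Second, if $R_2+R_3+R_4+R_5$ were all compact then, given your computation of $R_0$, the lemma would produce only $\vp(D_4)(S-\textup{Id})\chi_1(L)$ modulo compacts, forcing $\vp(D_4)(S-\textup{Id})\chi_2(L)$ to be compact; but this operator has non-zero image in the Cordes quotient---it is exactly what generates the non-trivial edge $\Gamma_6$ (and part of $\Gamma_1$) in Proposition \ref{prop:quotient}, since $\vp(+\infty)=1$ and $S(\lambda)\neq\textup{Id}$ on the support of $\chi_2$ in general. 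So while your framing matches the paper's intended proof-by-citation, the mechanism you supply for the low-energy part would fail, and the correct bookkeeping (as in \cite[Section 4]{ANRR}) is: $R_0$, $R_2$, $R_4$ build $\vp(D_4)(S-\textup{Id})$ modulo compacts, $R_3$ and $R_5$ are compact, and $R_1$ alone carries $B_{res}$.
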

It remains to analyse the term $R_1$, which contains the worst singularity of $M(k)^{-1}$. For this analysis we require some additional preparatory results. We consider the unitary transformation $\mathcal{U}: L^2(\R^+) \to L^2(\R)$ defined for $f \in L^2(\R^+)$ and $x \in \R$ by
\begin{align}\label{eq:U-defn}
[\mathcal{U} f](x) &= 2^{-\frac12} \e^{-x} f(\e^{-2x}).
\end{align}
We also introduce the integral operator $\Xi: L^2(\R^+) \to L^2(\R^+)$ with kernel given by
\begin{align}\label{eq:Xi-defn}
\Xi(\mu,\lambda) &= \chi_0(\mu) \frac{1}{\mu^\frac12 + \lambda^\frac12} \frac{1}{\lambda^\frac12 \ln{(\lambda)}} \chi_0(\lambda).
\end{align}
We can identify $\Xi$ as a function of the operators $X$ of position and $D_1$ of dilation in $\R$. The following is \cite[Lemma 3.6]{ANRR}.
\begin{lemma}\label{lem:Xi-rep}
Define $\Xi: L^2(\R^+) \to L^2(\R^+)$ by Equation \eqref{eq:Xi-defn}. Then we have in $L^2(\R)$ the equality
\begin{align*}
\mathcal{U} \Xi \mathcal{U}^* &= - \chi_0(\e^{-2X}) \frac{2}{1+2iD_1} \chi_0(\e^{-2X}) + K_1,
\end{align*}
where $D_1$ is the generator of dilations in $L^2(\R)$, $X$ is the operator of multiplication by the variable in $L^2(\R)$ and $K_1$ is a compact operator.
\end{lemma}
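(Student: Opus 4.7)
The plan is to compute the integral kernel of $\mathcal{U}\Xi\mathcal{U}^*$ directly by change of variables, identify its principal part as coming from a function of the dilation generator $D_1$ sandwiched by the cutoffs, and verify that the remainder is Hilbert--Schmidt.

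First, applying the substitutions $\mu=e^{-2x}$ and $\lambda=e^{-2y}$ in the integral defining $\mathcal{U}\Xi\mathcal{U}^*f$, and using $\lambda^{1/2}=e^{-y}$, $\ln\lambda=-2y$, $d\lambda=-2e^{-2y}\,dy$ together with the factors from $\mathcal{U}$ and $\mathcal{U}^*$, a short calculation shows that $\mathcal{U}\Xi\mathcal{U}^*$ is the integral operator on $L^2(\R)$ with kernel
\[
K(x,y)=-\chi_0(e^{-2x})\,\frac{1}{1+e^{x-y}}\,\frac{1}{y}\,\chi_0(e^{-2y}).
\]
The factor $1/y$ arises from $1/(\lambda^{1/2}\ln\lambda)$ together with the Jacobian, while the sigmoid $1/(1+e^{x-y})$ comes from $1/(\mu^{1/2}+\lambda^{1/2})$ after multiplication by the factor $e^{-x}$ from $\mathcal{U}$.

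Secondly, I would compute the kernel of $\tfrac{2}{1+2iD_1}$ via the resolvent identity $\tfrac{2}{1+2iD_1}=\int_0^\infty e^{-t/2}e^{-itD_1}\,dt$ together with the dilation action $(e^{-itD_1}f)(x)=e^{-t/2}f(e^{-t}x)$. The substitution $u=e^{-t}x$ then expresses $\tfrac{2}{1+2iD_1}$ as a Hardy-type integral operator whose kernel is proportional to $1/|x|$ on a triangular region determined by $\mathrm{sign}(x)$ and $|y|\le|x|$. Sandwiching by $\chi_0(e^{-2X})$, which is supported in a set of the form $\{x\ge c>0\}$, therefore yields a kernel of the form $\chi_0(e^{-2x})\,\frac{\mathbf{1}_A(x,y)}{y}\,\chi_0(e^{-2y})$ for an explicit triangular region $A$, after identifying $1/|x|$ with $1/|y|$ up to a lower-order discrepancy.

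The key estimate is then the decomposition $\tfrac{1}{1+e^{x-y}}=\mathbf{1}_A(x,y)+r(x,y)$ with $|r(x,y)|\le e^{-|x-y|}$, which separates $K(x,y)$ into the kernel arising from $D_1$ and a remainder $K_1(x,y)$ satisfying
\[
|K_1(x,y)|\le C\,\chi_0(e^{-2x})\,\chi_0(e^{-2y})\,\frac{e^{-|x-y|}}{y}.
\]
Because $\chi_0(e^{-2y})$ is supported in $\{y\ge c\}$, the bound $\int_c^\infty y^{-2}\int_\R e^{-2|x-y|}\,dx\,dy<\infty$ shows that $K_1$ is Hilbert--Schmidt and hence compact. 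The main obstacle is the bookkeeping in the matching step: replacing $1/y$ by $1/x$ (or vice versa) on the triangular region introduces an additional error of the form $\tfrac{x-y}{xy}$ that must also be absorbed into $K_1$ while preserving the Hilbert--Schmidt bound. The analogous two-dimensional calculation is carried out in detail in \cite{ANRR}, and the argument here is essentially parallel.
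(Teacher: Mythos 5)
Your strategy (compute the kernel of $\mathcal{U}\Xi\mathcal{U}^*$ in logarithmic variables, compare with the explicit kernel of the Mellin multiplier, absorb the difference as a Hilbert--Schmidt error) is reasonable in outline --- the paper itself offers no proof, simply citing \cite[Lemma 3.6]{ANRR} --- and your first step is correct: with the unitary normalisation $[\mathcal{U}f](x)=\sqrt{2}\,\e^{-x}f(\e^{-2x})$ (the printed prefactor $2^{-1/2}$ is not unitary), one gets exactly the kernel $K(x,y)=-\chi_0(\e^{-2x})(1+\e^{x-y})^{-1}y^{-1}\chi_0(\e^{-2y})$, and your closing Hilbert--Schmidt estimate is also fine. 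The genuine gap is in the matching step. With the paper's convention $[\e^{itD_1}f](x)=\e^{t/2}f(\e^{t}x)$, your own Laplace formula gives
\[
\Bigl[\tfrac{2}{1+2iD_1}f\Bigr](x)=\int_0^\infty \e^{-t}f(\e^{-t}x)\,\d t=\frac{1}{x}\int_0^x f(u)\,\d u\qquad (x>0),
\]
the Hardy operator, with kernel $\mathbf{1}_{\{0<y<x\}}/x$: \emph{lower} triangular, weighted by the output variable. But $(1+\e^{x-y})^{-1}\to 1$ precisely as $y-x\to+\infty$, so your estimate $|r(x,y)|\le \e^{-|x-y|}$ forces $A=\{y>x\}$; the principal part of $K$ is $-\chi_0\,\mathbf{1}_{\{y>x\}}y^{-1}\chi_0$, which is exactly the kernel of the \emph{adjoint} Hardy operator, i.e.\ of $-\chi_0(\e^{-2X})\tfrac{2}{1-2iD_1}\chi_0(\e^{-2X})$ --- no identification of $1/|x|$ with $1/|y|$ is needed there at all. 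You have silently conflated the two triangles.

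This cannot be repaired within your scheme. First, the proposed absorption of the discrepancy $\tfrac{x-y}{xy}$ on the Hardy triangle fails: the kernel $\mathbf{1}_{\{c\le y\le x\}}\tfrac{x-y}{xy}$ is not Hilbert--Schmidt, since $\int_y^\infty (x-y)^2x^{-2}\,\d x=\infty$; worse, $\mathbf{1}_{\{c\le y\le x\}}y^{-1}$ is not even bounded on $L^2$ (it maps nonnegative $L^2$ functions to functions with a nonzero limit at $+\infty$), so replacing $1/x$ by $1/y$ on $\{y<x\}$ is not a ``lower-order'' modification. Second, the triangle flip itself cannot be pushed into $K_1$: the difference $\tfrac{2}{1+2iD_1}-\tfrac{2}{1-2iD_1}=\tfrac{-8iD_1}{1+4D_1^2}$ is a multiplier with non-vanishing symbol $\tfrac{-8is}{1+4s^2}$, and since $\chi_0(\e^{-2X})$ equals $1$ on a half-line, the sandwiched difference has nonzero image in the Cordes quotient (it is precisely the kind of edge symbol recorded as $\Gamma_4$ in Proposition \ref{prop:quotient}), hence is not compact; were it compact, the resonance contribution in Corollary \ref{cor:index-resonance} would be invisible. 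So your computations, carried out consistently, prove the identity with $\tfrac{2}{1-2iD_1}$ under the conventions printed in Section \ref{sec:scats}; to obtain the statement with $\tfrac{2}{1+2iD_1}$ you must either adopt the opposite sign convention for the dilation group (as the statement imported from \cite{ANRR} may implicitly do) or flag a sign typo --- either way the discrepancy must be confronted explicitly, because the two candidate principal parts differ by a non-compact operator.
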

To analyse the term $R_1$ we need to introduce two more operators, analogues of which have been studied in \cite[Lemmas 3.1 and 3.5]{ANRR}.
\begin{lemma}\label{lem:B-N-properties}
Define the maps $B:\R^+ \to \mathcal{K}(\mathcal{P},\H)$ and $N: \R^+ \to \B(\H, \mathcal{P})$ for $\lambda, \mu \in \R^+$ by
\begin{align*}
B(\lambda) &= \chi_2(\lambda) \lambda^\frac12 \ln{(\lambda)} Q_1 \left(U+vR_0(\lambda+i0) v\right)^{-1} v \Gamma_0(\lambda)^*, \quad \textup{ and } \\
N(\mu) &= \Gamma_0(\mu) \chi_2(\mu) \mu^{-\frac12} v Q_1.
\end{align*}
The multiplication operator defined by $B$ extends continuously to  $\B(\H_{spec}, L^2(\R^+,\H))$ and the multiplication operator defined by $N$ extends continuously to $\B(L^2(\R^+, \H), \H_{spec})$.
\end{lemma}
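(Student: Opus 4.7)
The plan is to recast both claims as uniform essential-supremum bounds. For a Bochner-measurable map $\lambda\mapsto B(\lambda)\in\B(\mathcal{P},\H)$, the pointwise multiplication operator is bounded from $\H_{spec}=L^2(\R^+,\mathcal{P})$ to $L^2(\R^+,\H)$ precisely when $\textup{ess\,sup}_\lambda\|B(\lambda)\|_{\B(\mathcal{P},\H)}<\infty$, with operator norm equal to this supremum, and likewise for $N$. Since both $B$ and $N$ carry the cutoff $\chi_2$ supported in $[0,3/4]$, and norm-continuity of $\lambda\mapsto B(\lambda)$ and $\mu\mapsto N(\mu)$ on compact subsets of $(0,\infty)$ follows from the limiting absorption principle together with Theorem \ref{thm:Mk-expansion} and Lemma \ref{lem:gamma-expansion}, the task reduces to bounding $\|B(\lambda)\|$ and $\|N(\mu)\|$ uniformly as $\lambda,\mu\to 0$.

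For $B(\lambda)$, I would insert the expansion of Lemma \ref{lem:new-expansion}, multiply on the left by $Q_1$, and use $Q_2\le Q_1$ to obtain $Q_1T_1=T_1$ and $Q_1Q_2=Q_2$, so that $Q_1(T_1-Q_2C_{21}T_1)=T_1-Q_2C_{21}T_1$. Including the prefactor $\chi_2(\lambda)\lambda^{1/2}\ln(\lambda)$ then gives
\begin{align*}
B(\lambda)&=\chi_2(\lambda)\ln(\lambda)h(\lambda^{1/2})(T_1-Q_2 C_{21}T_1)v\gamma_0^*\\
&\quad+\chi_2(\lambda)\lambda^{1/2}\ln(\lambda)Q_1 D_2 v\gamma_1^*+\chi_2(\lambda)\lambda^{1/2}\ln(\lambda)\,O(\eta).
\end{align*}
Since $\eta^{-1}=\tfrac12\ln(\lambda)-i\tfrac{\pi}{2}$, one has $\ln(\lambda)h(\lambda^{1/2})=\ln(\lambda)\bigl(c_1+c_2(\tfrac12\ln(\lambda)-i\tfrac{\pi}{2})\bigr)^{-1}\to 2/c_2$ as $\lambda\to 0$, the factor $\lambda^{1/2}\ln(\lambda)$ tends to zero, and $\lambda^{1/2}\ln(\lambda)\,\eta\to 0$ because $|\eta|\sim 2/|\ln(\lambda)|$. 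Non-vanishing of $c_2$ in the presence of a resonance is supplied by Equation \eqref{eq:c2-value}. Compactness of each $B(\lambda)$ in $\B(\mathcal{P},\H)$ follows because $v\gamma_0^*$ and $v\gamma_1^*$ have finite-dimensional range (their definitions show $\gamma_0v,\gamma_1v$ map into the five-dimensional span of the constants and linear coordinates on $\Sf^3$), and the remainder composes a uniformly bounded operator with the compact factor $v\Gamma_0(\lambda)^*$.

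For $N(\mu)$, I would apply Lemma \ref{lem:gamma-expansion} directly,
\begin{align*}
N(\mu)=\chi_2(\mu)\mu^{-1/2}\bigl(\mu^{1/2}\gamma_0 v+\mu\gamma_1 v+O(\mu^{3/2})\bigr)Q_1,
\end{align*}
and then decompose $Q_1=T_1+Q_2$ together with the orthogonality $\gamma_0 v Q_2=0$ from \cite[Lemma 7.2]{erdogan14}. This reduces the leading coefficient to $\gamma_0 v T_1$, which is finite rank and hence bounded, while the remainder is $O(\mu^{1/2})$, yielding $\sup_{\mu>0}\|N(\mu)\|<\infty$.

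The main obstacle is making precise the three-way cancellation behind the $B$ estimate: the polar $\lambda^{-1/2}$ in $(U+vR_0(\lambda+i0)v)^{-1}v\Gamma_0(\lambda)^*$, the prefactor $\lambda^{1/2}\ln(\lambda)$ written into the definition of $B$, and the logarithmic decay $h(\lambda^{1/2})\sim 2/(c_2\ln(\lambda))$ produced by the resonance must combine into a bounded limit. This balance is dictated by the precise form of Theorem \ref{thm:Mk-expansion} and is specific to the four-dimensional resonance structure; in the no-resonance case the $T_1$ projection vanishes and the $k^{-2}$ pole in $M(k)^{-1}$ is absent, but then so is the $\ln(\lambda)$ one would use to rescale. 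For $N$ the analogous subtlety is that the $\mu^{-1/2}$ is absorbed by the $\mu^{1/2}$ in $\Gamma_0(\mu)v$ only because $\gamma_0 v$ survives composition with $Q_1$, which is controlled by $\gamma_0 v Q_2=0$.
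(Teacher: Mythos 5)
Your proof is correct and takes essentially the same route as the paper, which proves this lemma only by deferring to the two-dimensional analogues \cite[Lemmas 3.1 and 3.5]{ANRR}: the cutoff $\chi_2$ localises to $[0,3/4]$, norm continuity holds away from zero, and the expansion of Lemma \ref{lem:new-expansion} balances the prefactor $\lambda^{\frac12}\ln(\lambda)$ against the $\lambda^{-\frac12}h(\lambda^{\frac12})$ singularity (with $\ln(\lambda)h(\lambda^{\frac12})\to 2/c_2$ when $T_1\neq 0$), yielding the uniform essential-supremum bound that gives boundedness of the multiplication operators. Two cosmetic points do not affect correctness: non-vanishing of $c_2$ requires the standard fact $\langle v, Uv\psi\rangle\neq 0$ for a genuine resonance (which Equation \eqref{eq:c2-value} alone does not supply, and which the paper also uses tacitly in Lemma \ref{lem:N-B-zero}), and the identity $\gamma_0 v Q_2=0$ is superfluous for the boundedness of $N$, since $\gamma_0 v Q_1$ is already bounded; it matters only later for computing $N(0)B(0)$.
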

By an identical proof to \cite[Lemma 3.7]{ANRR} we obtain the following.
\begin{lemma}\label{lem:B-res}
The term $R_1$ can be written as $\langle (B_{res}+K) f, g \rangle$ with $B_{res} = N\Xi B$ and $K$ compact.
\end{lemma}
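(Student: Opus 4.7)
The plan is to take the limit $\eps\to 0$ in the integral defining $R_1$ explicitly, identify the limiting kernel with the composition $N\Xi B$, and absorb the remaining error into a compact operator $K$. The decisive algebraic observation is that, on $\R^+\times\R^+$,
\begin{align*}
\lim_{\eps\to 0}\frac{\mu^{1/2}-\lambda^{1/2}}{\mu-\lambda-i\eps}=\frac{1}{\mu^{1/2}+\lambda^{1/2}},
\end{align*}
so the incipient pole of $(\mu-\lambda-i\eps)^{-1}$ at $\mu=\lambda$ is cancelled by the numerator $\mu^{1/2}-\lambda^{1/2}$ that appears explicitly in $R_1$. This is what makes $R_1$ amenable to rewriting as a smooth double integral.

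Using this identity together with the weak concentration of $F_0^{*}\delta_{\eps}(L-\lambda)\chi_2(L)f$ at the spectral value $\lambda$ (which, after applying the surrounding operators, effectively replaces $\tilde\psi_\eps(\lambda)$ by $Q_1(U+vR_0(\lambda+i0)v)^{-1}v\Gamma_0(\lambda)^{*}\chi_2(\lambda)f(\lambda)$), I would show that, modulo a compact remainder, $R_1$ equals
\begin{align*}
\int_0^\infty\!\int_0^\infty\Big\langle\Gamma_0(\mu)vQ_1\frac{\chi_2(\mu)}{\mu^{1/2}(\mu^{1/2}+\lambda^{1/2})}Q_1(U+vR_0(\lambda+i0)v)^{-1}v\Gamma_0(\lambda)^{*}\chi_2(\lambda)f(\lambda),g(\mu)\Big\rangle\,\d\lambda\,\d\mu.
\end{align*}
Since $\chi_0$ equals $1$ on the support of $\chi_2$, the factor $\chi_0(\mu)\chi_0(\lambda)$ may be inserted freely into the kernel, and after multiplying and dividing by $\lambda^{1/2}\ln{(\lambda)}$ the integrand factorises precisely as $N(\mu)\,\Xi(\mu,\lambda)\,B(\lambda)f(\lambda)$. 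This gives $R_1=\langle(N\Xi B+K)f,g\rangle=\langle(B_{res}+K)f,g\rangle$.

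The main obstacle is justifying the limit exchange and establishing the compactness of $K$. Both rest on Lemma \ref{lem:new-expansion}: the leading singular part of $(U+vR_0(\lambda+i0)v)^{-1}v\Gamma_0(\lambda)^{*}$ is of order $\lambda^{-1/2}h(\lambda^{1/2})\sim\lambda^{-1/2}/\ln{(\lambda)}$, which is precisely absorbed by the prefactor $\lambda^{1/2}\ln{(\lambda)}$ of $B(\lambda)$ (this is what makes $B$ a bounded multiplication operator in Lemma \ref{lem:B-N-properties}). The subleading pieces $D_2v\gamma_1^{*}+O(\eta)$ are continuous at $\lambda=0$ and, once sandwiched between $\Gamma_0(\mu)v$ and the $(\mu-\lambda-i\eps)^{-1}$-type factor, yield Hilbert--Schmidt kernels on $\H_{spec}$; the overall argument parallels that of \cite[Lemma 3.7]{ANRR}, to which we refer for complete details.
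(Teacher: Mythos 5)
Your proposal is correct and takes essentially the same route as the paper, whose proof of this lemma simply invokes \cite[Lemma 3.7]{ANRR}: the cancellation $\lim_{\eps\to0}\frac{\mu^{1/2}-\lambda^{1/2}}{\mu-\lambda-i\eps}=\frac{1}{\mu^{1/2}+\lambda^{1/2}}$, the concentration of $\delta_\eps(L-\lambda)$ at the spectral value, the insertion of $\chi_0$ (legitimate since $\chi_0\chi_2=\chi_2$) together with multiplying and dividing by $\lambda^{1/2}\ln(\lambda)$ to factor the kernel as $N(\mu)\Xi(\mu,\lambda)B(\lambda)$, and the control of the remainder via Lemma \ref{lem:new-expansion} are exactly the ingredients of that argument. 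The only point you should state more carefully is the final compactness step: the kernel is operator-valued, so square-integrability of its operator norm together with the compactness (indeed finite rank, via $Q_1$) of its values, rather than a scalar Hilbert--Schmidt criterion, is what yields compactness of $K$, precisely as in \cite{ANRR}.
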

Collecting together the results of Lemmas \ref{lem:many-Rs} and \ref{lem:B-res} we can now prove Theorem \ref{thm:main-wave}.
\begin{proof}[Proof of Theorem \ref{thm:main-wave}]
By construction we have the equality
\begin{align*}
-\langle F_0(W_--\textup{Id})F_0^* f, g \rangle &= R_0+R_1+R_2+R_3+R_4+R_5.
\end{align*}
By Lemma \ref{lem:many-Rs} we have
\begin{align*}
R_0 + R_2 + R_3 + R_4 + R_5 &= -\langle \left(\textup{Id} + \vp(D_4)(S-\textup{Id}) + K \right) f, g \rangle.
\end{align*}
By Lemma \ref{lem:B-res} we have $R_1 = \langle (B_{res}+K) f, g \rangle$ with $B_{res} = N\Xi B$ and $K$ compact. Adding these results together gives 
\begin{align}\label{eq:Wave-op-expression}
F_0(W_--\textup{Id})F_0^* &= \varphi(D_4)(S-\textup{Id}) -N \Xi B + K
\end{align}
for a compact operator $K$, from which the statement follows.
\end{proof}
 As suggested by the analysis of Lemma \ref{lem:Xi-rep}, we investigate Equation \eqref{eq:Wave-op-expression} in the Hilbert space $L^2(\R, \mathcal{P})$ by using the unitary map $\mathcal{U}$ of Equation \eqref{eq:U-defn}. For any multiplication operator $M$ defined by $\R^+ \ni \lambda \mapsto M(\lambda)$ and $f \in L^2(\R,\mathcal{P})$ we have
\begin{align*}
[\mathcal{U}M\mathcal{U}^* f](x) &= M\left(\e^{-2x}\right)  f(x) = \left[M(\e^{-2X}) f\right](x).
\end{align*}
If we consider the dilation group $(U_+(t))_{t\in\R}$ we obtain
\begin{align*}
[\mathcal{U} U_+(t) \mathcal{U}^* f](x) &= f\left(x-\frac12 t\right) = \left[\e^{-it \frac12 D} f\right](x),
\end{align*}
where $D = -i \frac{\d}{\d x}$. Combining this information we find
\begin{align*}
&\mathcal{U} F_0(W_--\textup{Id})F_0^* \mathcal{U}^* \\
&= \left( \frac12(\textup{Id}-\tanh{\left(\frac{\pi}{2} D \right)} - i \cosh{\left(\frac{\pi}{2} D \right)}^{-1} \right)(\tilde{S}(X)-\textup{Id})-\tilde{N}(X) \left( \frac{2}{1+2iD_1} \right) \tilde{B}(X) + K,
\end{align*}
where $\tilde{S}(X) = S\left(\e^{-2X}\right)$, $\tilde{N}(X) = N\left(\e^{-2X}\right)$, $\tilde{B}(X) = B\left(\e^{-2X}\right)$ and $K$ is a compact operator. We note that the three generators $X,D, D_1$, the position operator, the generator of translations and the generator of dilations are all present in this expression. A $C^*$-algebra generated by continuous functions of these three operators has been considered in \cite[Chapter V]{cordes79}. The algebra of Cordes is constructed on the Hilbert space $L^2(\R^+)$, so we need an additional unitary to use this framework, namely the decomposition of $L^2(\R)$ into even and odd functions.

Define $\mathcal{V} : L^2(\R) \to L^2(\R^+,\C^2)$ by
\begin{align*}
[\mathcal{V} f](y) &= 2^\frac12 \begin{pmatrix} \frac{f(y)+f(-y)}{2} \\ \frac{f(y)-f(-y)}{2} \end{pmatrix} := 2^\frac12 \begin{pmatrix} f_e \\ f_o \end{pmatrix}
\end{align*}
for $f \in L^2(\R)$. The adjoint of $\mathcal{V}$ is defined for $g_1,g_2 \in L^2(\R^+)$ by
\begin{align*}
\left[ \mathcal{V}^* \begin{pmatrix} g_1 \\ g_2 \end{pmatrix} \right](x) &= 2^{-\frac12} \left[ g_1(|x|) + \textup{sign}(x) g_2(|x|) \right].
\end{align*}
If $m:\R \to \C$ we find
\begin{align*}
\mathcal{V} m(X) \mathcal{V}^* &= \begin{pmatrix} m_e(L) & m_o(L) \\ m_o(L) & m_e(L) \end{pmatrix},
\end{align*}
whilst
\begin{align*}
\mathcal{V} m(D_1) \mathcal{V}^* &= \begin{pmatrix} m(D_+) & 0 \\ 0 & m(D_+) \end{pmatrix}.
\end{align*}
In order to consider $\mathcal{V} m(D)\mathcal{V}^*$, let us denote by $\F_1$ the usual unitary Fourier transform
in $L^2(\R)$, and let $\F_{ N}$, $\F_{ D}$ be the unitary cosine and sine transforms on $L^2(\R_+)$, respectively.  The index $ N$ and $ D$ are related to the Neumann Laplacian and the Dirichlet Laplacian in $L^2(\R_+)$, which are diagonalised by $\F_{ N}$ and $\F_{ D}$, respectively.  
Note also that the operators $\F_N$ and $\F_D$ correspond to their own inverse.
It is then easily checked that
\begin{align*}
\mathcal{V} \F_1\mathcal{V}^* &=
\begin{pmatrix}
\F_{ N} & 0\\
0 & i\F_{ D}
\end{pmatrix}.
\end{align*}
In addition, by a straightforward computation one gets
\begin{align*}
\mathcal{V} m(D)\mathcal{V}^* = \mathcal{V} \F_1^* m(X) \F_1 \mathcal{V}^*
&= 
\begin{pmatrix}
\F_{ N} m_{ e}(L)\F_{ N} &  -i \F_{ N} m_{ o}(L)\F_{ D}\\
i \F_{ D}m_{ o}(L)\F_{ N} & \F_{ D} m_{ e}(L) \F_{ D}
\end{pmatrix}.
\end{align*}
For the final step, we recall that the Neumann Laplacian satisfies $-\Delta_{ N}:=\F_{ N} L^2 \F_{ N}$, and that 
\begin{align*}
i\F_{ N} \F_{ D} &= -\tanh(\pi D_+)+ i \cosh(\pi D_+)^{-1}=:\phi(D_+).
\end{align*}
We refer for example to \cite[Proposition 4.13]{DR} for a proof of the above equality.
Then, we obtain
\begin{align*}\label{eq:D-unitary}
\mathcal{V} m(D)\mathcal{V}^* &= \mathcal{V} \F_1^* m(X) \F_1 \mathcal{V}^*
 = 
\begin{pmatrix}
m_{ e}\big(\sqrt{-\Delta_{ N}}\big) &  - m_{ o}\big(\sqrt{-\Delta_{ N}}\big)
\phi(D_+)\\
-\overline{\phi}(D_+) m_{ o}\big(\sqrt{-\Delta_{ N}}\big) 
&\  \overline{\phi}(D_+) m_{ e}\big(\sqrt{-\Delta_{ N}}\big) \phi(D_+)
\end{pmatrix}.
\end{align*}
Combining these we can obtain the image of the wave operator in $L^2(\R^+,\C^2)$.

\begin{lemma}\label{lem:wave-op-transformed}
The expression $\mathcal{V}\mathcal{U}F_0 W_-F_0^*\mathcal{U}^* \mathcal{V}^*$ is given by
\begin{align*}
&
\begin{pmatrix} 
1 & 0 \\ 0 & 1
\end{pmatrix}
+\frac{1}{2}
\begin{pmatrix} 
1 &  \overline{\phi}\left( \frac12 \sqrt{-\Delta_{N}} \right) \phi(D_+) \\ 
\phi(D_+)\overline{ \phi}\left( \frac12 \sqrt{-\Delta_{N}} \right)  & 1
\end{pmatrix}
\begin{pmatrix} 
\tilde{S}_{ e}(L)-1 & \tilde{S}_{ o}(L)  \\ 
\tilde{S}_{ o}(L) & \tilde{S}_{ e}(L) - 1
\end{pmatrix}
\\
& \qquad +\begin{pmatrix} 
\tilde{N}_{ e}(L) & \tilde{N}_{ o}(L)  \\ 
\tilde{N}_{ o}(L) & \tilde{N}_{ e}(L) 
\end{pmatrix}
 \begin{pmatrix} 
\frac{2}{1+2iD_+} & 0  \\ 
0 & \frac{2}{1+2iD_+}
\end{pmatrix}
\begin{pmatrix} 
\tilde{B}_{ e}(L) & \tilde{B}_{ o}(L)  \\ 
\tilde{B}_{ o}(L) & \tilde{B}_{ e}(L) 
\end{pmatrix}  + K \nonumber
\end{align*}
with $K$ compact.
\end{lemma}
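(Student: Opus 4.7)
The plan is to start from the expression for $\mathcal{U}F_0(W_- - \textup{Id})F_0^*\mathcal{U}^*$ displayed immediately above the lemma, conjugate it by $\mathcal{V}$ on both sides, and handle each of the three summands by invoking the matrix formulas for $\mathcal{V}m(X)\mathcal{V}^*$, $\mathcal{V}m(D_1)\mathcal{V}^*$ and $\mathcal{V}m(D)\mathcal{V}^*$ derived in the pages preceding the lemma.

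First I would dispense with the identity term, which transforms trivially to the $2\times 2$ identity block. Second, for the scattering term of the form $\psi(D)(\tilde{S}(X) - \textup{Id})$, the second factor becomes the matrix with $\tilde{S}_e(L)-1$ on the diagonal and $\tilde{S}_o(L)$ off-diagonal by the $\mathcal{V}m(X)\mathcal{V}^*$ formula. For the first factor, I would observe that the function $\psi(x) = \frac12(1 - \tanh(\pi x/2) - i\cosh(\pi x/2)^{-1})$ satisfies $\psi(x) = \frac12(1 + \overline{\phi}(x/2))$, an identity which follows immediately from the definition $\phi(y) = -\tanh(\pi y) + i\cosh(\pi y)^{-1}$. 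Then I would apply the $\mathcal{V}m(D)\mathcal{V}^*$ formula with $m(x) = \overline{\phi}(x/2)$, whose even and odd parts are $-i\cosh(\pi x/2)^{-1}$ and $-\tanh(\pi x/2)$ respectively. Using $\overline{\phi}(D_+)\phi(D_+) = \textup{Id}$ on the diagonal entries (valid since $|\phi|=1$ on $\R$), this collapses the expanded matrix into the outer $\frac12$-prefactored matrix of the lemma.

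Third, for the resonance term $\tilde{N}(X)\cdot 2(1+2iD_1)^{-1}\cdot\tilde{B}(X)$, I would transform each of the three factors separately. The outer factors become the $\tilde{N}_{e,o}(L)$ and $\tilde{B}_{e,o}(L)$ matrices by direct application of the $\mathcal{V}m(X)\mathcal{V}^*$ formula, while the middle factor, being a function of $D_1$ alone, becomes the scalar-diagonal matrix $\textup{diag}\bigl(2(1+2iD_+)^{-1}, 2(1+2iD_+)^{-1}\bigr)$ by the $\mathcal{V}m(D_1)\mathcal{V}^*$ formula (the function has no odd part in $D_1$).

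The main obstacle I expect is in the scattering-term step: after expanding $\mathcal{V}\psi(D)\mathcal{V}^*$ via the matrix formula, one obtains entries which are mixtures of $\cosh$ and $\tanh$ of $\sqrt{-\Delta_N}/2$, multiplied on one side by $\phi(D_+)$ or $\overline{\phi}(D_+)$; the bookkeeping needed to rearrange this into the stated form, in which $\overline{\phi}(\tfrac12\sqrt{-\Delta_N})$ appears uniformly in the anti-diagonal entries, is the main technical point. Any smoothing or commutator remainders that arise during this reorganisation, together with the compact remainder $K_1$ from Lemma \ref{lem:Xi-rep} and further cutoff-$\chi_0$ contributions in the resonance term, are absorbed into the compact operator $K$.
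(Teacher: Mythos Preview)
Your approach is correct and coincides with the paper's: the lemma is stated without proof precisely because it follows by applying the three conjugation formulas for $\mathcal{V}m(X)\mathcal{V}^*$, $\mathcal{V}m(D_1)\mathcal{V}^*$ and $\mathcal{V}m(D)\mathcal{V}^*$, derived immediately before it, to the displayed expression for $\mathcal{U}F_0(W_--\textup{Id})F_0^*\mathcal{U}^*$. The only refinement worth flagging is that $\overline{\phi}(D_+)\phi(D_+)=\textup{Id}$ by itself does not reduce the diagonal entries to $\tfrac12$, since the even part $m_e$ is not constant; the match to the stated matrix also uses that the residual $\cosh^{-1}$-type terms, once multiplied by the $\tilde{S}-1$ block, lie in the compact ideal of the Cordes algebra $E$---which is exactly the absorption into $K$ you already anticipate in your final paragraph.
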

We now determine the precise contribution of resonances to the above equality.
\begin{lemma}\label{lem:N-B-zero}
We have the equality
\begin{align*}
N(0) B(0) &= - P_s,
\end{align*}
where $P_s = 0$ if there does not exist a resonance and $P_s$ is the orthogonal projection onto the spherical harmonics of order zero in $\mathcal{P}$ if there exists a resonance.
\end{lemma}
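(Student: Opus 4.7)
The plan is to evaluate the limits of $N(\mu)$ and $B(\lambda)$ separately as $\mu,\lambda\to 0^+$ (where $\chi_2\equiv 1$), and then multiply, using Lemmas \ref{lem:new-expansion} and \ref{lem:gamma-expansion} together with the projection identities already collected in the paper. Substituting the expansion of Lemma \ref{lem:gamma-expansion} into $N(\mu)=\Gamma_0(\mu)\mu^{-1/2}v Q_1$ the factor $\mu^{1/2}\gamma_0 v$ balances $\mu^{-1/2}$, and the identity $\gamma_0 v Q_2=0$ combined with $Q_1=T_1+Q_2$ gives $N(0)=\gamma_0 v Q_1=\gamma_0 v T_1$.

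For $B(\lambda)$ I would multiply the expansion of Lemma \ref{lem:new-expansion} by $\lambda^{1/2}\ln(\lambda)Q_1$ and check each term: the $D_2 v\gamma_1^*$ contribution is killed by the vanishing factor $\lambda^{1/2}\ln\lambda$, the $O(\eta)$ remainder becomes $O(\lambda^{1/2})$ after multiplication (since $\eta\sim 2/\ln\lambda$), and only the $\lambda^{-1/2}h(\lambda^{1/2})(T_1-Q_2 C_{21}T_1)v\gamma_0^*$ term survives. Here one computes $\eta^{-1}=\tfrac12\ln\lambda-i\pi/2$ with $k=-i\lambda^{1/2}$, so $h(\lambda^{1/2})\sim 2/(c_2\ln\lambda)$ and hence $h(\lambda^{1/2})\ln\lambda\to 2/c_2$. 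Because $T_1,Q_2\leq Q_1$, the factor $Q_1$ acts as the identity, yielding
\[
B(0)=\frac{2}{c_2}(T_1-Q_2 C_{21}T_1)v\gamma_0^*.
\]

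Multiplying, the orthogonality $T_1 Q_2=0$ (from $T_1=Q_1-Q_2$ with $Q_1\geq Q_2$) together with $T_1^2=T_1$ collapses the cross term, giving $N(0)B(0)=\frac{2}{c_2}\gamma_0 v T_1 v\gamma_0^*$. To identify this as $-P_s$ I would insert the rank-one form $T_1=\langle\,\cdot\,,Uv\psi\rangle Uv\psi$, the explicit formula $[\gamma_0 v g](\omega)=2^{-1/2}(2\pi)^{-2}\int v(x)g(x)\,\d x$, and its adjoint $v\gamma_0^* f = 2^{-1/2}(2\pi)^{-2}(\int_{\Sf^3}f)\,v$. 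A direct computation then yields, for $f\in\mathcal{P}$,
\[
\gamma_0 v T_1 v\gamma_0^* f = \frac{\langle v,Uv\psi\rangle^2}{32\pi^4}\Big(\int_{\Sf^3}f\Big)\mathbf{1}.
\]
Inserting $c_2=-\langle v,Uv\psi\rangle^2/(8\pi^2)$ from \eqref{eq:c2-value} cancels the resonance-dependent factor entirely, leaving the coefficient $-1/(2\pi^2)$. Since $|\Sf^3|=2\pi^2$, this is exactly $-P_s f$, where $P_s f=\frac{1}{2\pi^2}(\int_{\Sf^3}f)\mathbf{1}$ is the projection onto the constant spherical harmonic. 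In the non-resonant case $T_1=0$, both $N(0)$ and $B(0)$ vanish and $P_s=0$, matching.

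The main obstacle is bookkeeping in the two coupled limits: one must verify that the $\lambda^{1/2}\ln\lambda$ prefactor in the definition of $B$ kills every subleading term in Lemma \ref{lem:new-expansion} while producing a nontrivial finite limit for the $\lambda^{-1/2}h(\lambda^{1/2})$ piece, and one must track constants carefully enough that the resonance-dependent quantity $\langle v,Uv\psi\rangle^2$ cancels exactly between $c_2$ and $\gamma_0 v T_1 v\gamma_0^*$, producing the universal constant $-1/(2\pi^2)$ matching $|\Sf^3|^{-1}$. Everything else is an essentially algebraic consequence of the projection identities $Q_1\geq Q_2$, $T_1=Q_1-Q_2$, and $\gamma_0 v Q_2=0$.
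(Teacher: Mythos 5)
Your proposal is correct and follows essentially the same route as the paper's proof: both evaluate $B(0)=\tfrac{2}{c_2}(T_1-Q_2C_{21}T_1)v\gamma_0^*$ from Lemma \ref{lem:new-expansion} via the limit $\ln(\lambda)\,h(\lambda^{\frac12})\to \tfrac{2}{c_2}$, take $N(0)=\gamma_0 v Q_1$, reduce the product to $\tfrac{2}{c_2}\gamma_0 v T_1 v \gamma_0^*$ using the projection identities (you via $T_1Q_2=0$, the paper via $\gamma_0 v Q_2=0$, which are interchangeable here), and then insert the rank-one form of $T_1$ so that $\langle v, Uv\psi\rangle^2$ cancels against $c_2$ to give $-P_s$. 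Your intermediate constant $\langle v,Uv\psi\rangle^2/(32\pi^4)$ matches the paper's $\langle v,Uv\psi\rangle^2/(16\pi^2)$ once $\textup{Vol}(\Sf^3)=2\pi^2$ is absorbed into the normalisation of $P_s$.
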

\begin{proof}
The case when there are no resonances is clear since $T_1=0$, so suppose there does exist a resonance. We use Lemma \ref{lem:new-expansion} to see that
\begin{align*}
B(0) &= \lim_{\lambda \to 0}  \chi_2(\lambda) \lambda^\frac12 \ln{(\lambda)} Q_1 \left(U+vR_0(\lambda+i0) v\right)^{-1} v \Gamma_0(\lambda)^* \\
&= \lim_{\lambda \to 0}  \chi_2(\lambda) \lambda^\frac12 \ln{(\lambda)} \left( \lambda^{-\frac12} h(\lambda^\frac12) (T_1 - Q_2 C_{21} T_1) v \gamma_0^* + D_2 v \gamma_1^* + O(\eta) \right) \\
&=  \left( \lim_{\lambda \to 0} \ln{(\lambda)} h(\lambda^\frac12) \right) (T_1-Q_2 C_{21} T_1) v \gamma_0^* \\
&= \left( \lim_{\lambda \to 0} \frac{\ln{(\lambda)}}{ c_1+\left( \frac{\ln ( \lambda)}{2} - i \frac{\pi}{2} \right) c_2} \right) (T_1-Q_2 C_{21} T_1) v \gamma_0^* \\
&= \frac{2}{c_2}(T_1-Q_2 C_{21} T_1 ) v \gamma_0^*.
\end{align*}
The small energy behaviour of $\Gamma_0(\mu)v$ and the definition of $N$ gives
$
N(0) = \gamma_0 v Q_1.
$
Observing the relation $\gamma_0 v Q_2 = 0$ we multiply to obtain
\begin{align*}
N(0) B(0) &= \frac{2}{c_2} \gamma_0 v T_1 v \gamma_0^*.
\end{align*}
Recall from the discussion around Theorem \ref{thm:Mk-expansion} that for a non-zero resonance $\psi$ we have $T_1 = \langle \cdot, Uv\psi \rangle Uv\psi$. We find for $f \in \mathcal{P}$ and $\omega \in \mathbb{S}^3$ that
\begin{align*}
[\gamma_0 v T_1 v \gamma_0^* f](\omega) &= 2^{-\frac12} (2\pi)^{-2} \int_{\R^4}{v(x) [T_1 v \gamma_0^* f](x)\, \d x} \\
&= 2^{-\frac12} (2\pi)^{-2} \int_{\R^4}{v(x) \langle v \gamma_0^*f ,  Uv \psi \rangle U(x) v(x) \psi(x) \, \d x} \\
&= 2^{-1} (2\pi)^{-4} \left(\langle v, Uv\psi \rangle \right)^2 \int_{\mathbb{S}^3}{f(\theta) \, \d \theta}.
\end{align*}
Now note that $\textup{Vol}(\mathbb{S}^3) = 2 \pi^2$ to see that 
\begin{align*}
[\gamma_0 v T_1 v \gamma_0^* f](\omega) &= 2^{-1} (2\pi)^{-4} \left(\langle v, Uv\psi \rangle \right)^2 (2 \pi^2) \left( \frac{1}{\textup{Vol}(\mathbb{S}^3)} \int_{\mathbb{S}^3}{f(\theta) \, \d \theta} \right) \\
&= \frac{\left(\langle v, Uv\psi \rangle \right)^2 }{16 \pi^2} [P_s f](\omega),
\end{align*}
where $P_s$ denotes the projection onto the spherical harmonics of order zero in $\mathcal{P}$. Recalling the value of $c_2$ from Equation \eqref{eq:c2-value} we find the statement of the lemma.
\end{proof}

%%%%%%%%%%%%%%%%%%%%%%%%%%%%%%%%%%%%%%%%%%%%%%%%%%%%%%%%%%%%%%%%%%%%%
%%%%%%%%%%%%%%%%%%%%%%%%%%%%%%%%%%%%%%%%%%%%%%%%%%%%%%%%%%%%%%%%%%%%%
%%%%%%%%%%%%%%%%%%%%%%%%%%%%%%%%%%%%%%%%%%%%%%%%%%%%%%%%%%%%%%%%%%%%%

\section{Index theory and Levinson's theorem}\label{sec:decoupling}

In \cite[Section V.7]{cordes79}, the following $C^*$-subalgebra of $\B(L^2(\R^+))$ was introduced:
\begin{align*}
E &:= C^*\big(a_i(D_+)b_i(L)c_i(-\Delta_{N}): a_i \in C([-\infty,\infty]), b_i, c_i \in C([0,\infty])\big).
\end{align*}
It is shown in \cite[Theorem V.7.3]{cordes79} that the quotient algebra $E \slash \mathcal{K}(L^2(\R^+))$ is isomorphic to $C(\hexagon)$, the set of continuous functions defined on the edges of a hexagon.

As a consequence of the results of Cordes, we have the short exact sequence
$$
0 \longrightarrow \K\big(L^2(\R^+;\mathcal{P})^{\oplus2}\big) \longrightarrow
\big(M_2(E)\otimes \K(\mathcal{P})\big)^\sim \stackrel{q}{\longrightarrow} 
 \big(M_2\big(C(\hexagon)\big)\otimes \K(\mathcal{P})\big)^\sim \longrightarrow 0
$$
of $C^*$-algebras, and the corresponding $6$ term exact sequence in $K$-theory. In particular, we have $K_0\left( \K(L^2(\R^+, \mathcal{P})^{\oplus2}) \right) \cong \Z$ and $K_1\left( (M_2(C(\hexagon)) \otimes \K(\mathcal{P}))^\sim \right) \cong \Z$.

This picture will allow us to interpret Levinson's theorem as a topological result. Our interest in the $C^*$-algebra $E$ is that the wave operator is a product of functions of $D_+, L, -\Delta_N$, as shown in Lemma \ref{lem:wave-op-transformed}, and these functions are continuous and have limits at their endpoints. As such, the wave operator is an element of $(M_2(E) \otimes \mathcal{K}(\mathcal{P}))^\sim$.

%
%in the unitisation of $E$, as shown by the formula obtained in Lemma \ref{lem:wave-op-transformed}. Note that the functions of $D_+, L, -\Delta_N$ obtained in Lemma \ref{lem:wave-op-transformed} are continuous and have limits at their endpoints. 

%To fit the wave operator into the framework of Cordes, we consider the unital $C^*$-algebra $(M_2(E) \otimes \mathcal{K}(\mathcal{P}))^\sim$ (here $^\sim$ denotes unitisation). This algebra contains the ideal $M_2(\mathcal{K}(L^2(\R^+))) \otimes \mathcal{K}(\mathcal{P})$ and we have
%\begin{align*}
%(M_2(E) \otimes \mathcal{K}(\mathcal{P}))^\sim \slash \left(M_2(\mathcal{K}(L^2(\R^+))) \otimes \mathcal{K}(\mathcal{P}) \right) &= \left( M_2(C(\hexagon)) \otimes \mathcal{K}(\mathcal{P}) \right)^\sim.
%\end{align*}
We can then consider the image of the wave operator under the quotient map 
\begin{align*}
q&: (M_2(E) \otimes \mathcal{K}(\mathcal{P}))^\sim \to \left( M_2(C(\hexagon)) \otimes \mathcal{K}(\mathcal{P}) \right)^\sim,
\end{align*}
with kernel $M_2(\mathcal{K}(L^2(\R^+))) \otimes \mathcal{K}(\mathcal{P})$.

%
%For an operator of the form $a(D_+) b(L) c(-\Delta_N) \in E$, its image in the quotient algebra takes the form
%\begin{align*}
%\Gamma_1(s) &:= a(s) b(0) c(\infty), \quad & s \in [-\infty,\infty], \\
%\Gamma_2(\ell) &:= a(\infty) b(\ell) c(\infty), \quad & \ell \in [0,\infty], \\
%\Gamma_3(\xi) &:= a(\infty) b(\infty) c(\xi), \quad & \xi \in [\infty, 0], \\
%\Gamma_4(s) &:= a(s) b(\infty) c(0), \quad & s \in [\infty, -\infty], \\
%\Gamma_5(\xi) &:= a(-\infty) b(\infty) c(\xi), \quad & \xi \in [0,\infty], \\
%\Gamma_6(\ell) &:= a(-\infty) b(\ell)  c(\infty), \quad & \ell \in [\infty,0].
%\end{align*}
%Note that we have given an orientation to the intervals on which these functions are defined. As a result, the function
%\begin{align*}
%\Gamma &:= (\Gamma_1, \Gamma_2, \Gamma_3, \Gamma_4, \Gamma_5, \Gamma_6) : \hexagon \to \C
%\end{align*}
%is continuous, even at the vertices of the hexagon.

\begin{prop}\label{prop:quotient}
The operator $\mathcal{V} \mathcal{U} F_0 W_- F_0^* \mathcal{U}^* \mathcal{V}^*$ is an element of $\left(M_2(E) \otimes \mathcal{K}(\mathcal{P}) \right)^\sim$. Hence, up to unitary equivalence, the image of the wave operator $W_-$ through the quotient map $q$ is a continuous function
\begin{align*}
\Gamma &:= (\Gamma_1, \Gamma_2, \Gamma_3, \Gamma_4, \Gamma_5, \Gamma_6) : \hexagon \to M_2\ox \K(\mathcal{P})^\sim
\end{align*}
even at the vertices of the hexagon.
The restrictions to the edges (oriented as indicated) are
\begin{align*}
\Gamma_1(s) &= \begin{pmatrix} 1 & 0 \\ 0 & 1 \end{pmatrix} + \frac12 (S(1)-1) \begin{pmatrix} 1 & \phi(s) \\ \overline{\phi(s)} & 1 \end{pmatrix}, \quad & s \in [-\infty,\infty], \\
\Gamma_2(\ell) &= \begin{pmatrix} 1 & 0 \\ 0 & 1 \end{pmatrix}  + \frac12 (S(\e^{2\ell})-1) \begin{pmatrix} 1 & -1 \\ -1 & 1 \end{pmatrix}, \quad & \ell \in [0,\infty], \\
\Gamma_3(\xi) &= \begin{pmatrix} 1 & 0 \\ 0 & 1 \end{pmatrix}, \quad & \xi \in [\infty, 0], \\
\Gamma_4(s) &= \begin{pmatrix} 1 & 0 \\ 0 & 1 \end{pmatrix}  -\frac12 \frac{2}{1+2is}P_s \begin{pmatrix} 1 & 1 \\ 1 & 1 \end{pmatrix}, \quad & s \in [\infty, -\infty], \\
\Gamma_5(\xi) &= \begin{pmatrix} 1 & 0 \\ 0 & 1 \end{pmatrix} , \quad & \xi \in [0,\infty], \\
\Gamma_6(\ell) &= \begin{pmatrix} 1 & 0 \\ 0 & 1 \end{pmatrix}  + \frac12 (S(\e^{-2\ell})-1) \begin{pmatrix} 1 & 1 \\ 1 & 1 \end{pmatrix}, \quad & \ell \in [\infty,0].
\end{align*}
\end{prop}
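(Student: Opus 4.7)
The plan is to prove the proposition in two steps: first, show that the transformed wave operator belongs to $(M_2(E)\otimes\K(\mathcal P))^\sim$; second, compute its image under the quotient map $q$ on each of the six edges of the hexagon.

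For the membership in the algebra, I would apply Lemma \ref{lem:wave-op-transformed}. The identity contribution provides the unitization. The scattering summand is a $2\times 2$ matrix whose entries are products of $\phi(D_+)$, $\overline\phi(\tfrac12\sqrt{-\Delta_N})$, and $\tilde S_{e/o}(L)$; each of $\phi$ and $\overline\phi$ is continuous on the extended real line, while $\tilde S_{e/o}$ is continuous on $[0,\infty]$ with $\K(\mathcal P)$-valued coefficients thanks to Theorem \ref{thm: stationary scattering operator} (in particular, $S(\lambda)-\Id$ is trace class and $S(0)=S(\infty)=\Id$). The resonance summand $\tilde N\cdot\frac{2}{1+2iD_+}\cdot\tilde B$ is treated similarly, using the continuity of $\tilde N(L), \tilde B(L)$ on $[0,\infty]$ with compact $\mathcal P$-valued coefficients provided by Lemma \ref{lem:B-N-properties}. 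Each such product therefore lies in $M_2(E)\otimes\K(\mathcal P)$, and the compact remainder $K$ belongs to the kernel of $q$.

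For the quotient computation, each edge of the hexagon corresponds to one of $D_+$, $L$, $\sqrt{-\Delta_N}$ varying over its spectrum while the other two sit at boundary values. The essential endpoint evaluations are $\tilde S_e(0)=S(1)$, $\tilde S_e(\infty)=1$, $\tilde S_o(0)=\tilde S_o(\infty)=0$, $\phi(\pm\infty)=\mp 1$, and, due to the $\chi_2$ cutoffs in the definitions of $N$ and $B$, the functions $\tilde N(L)$ and $\tilde B(L)$ vanish near $L=0$ and satisfy $\tilde N_e(\infty)=\tilde N_o(\infty)=\tfrac12 N(0)$ together with the analogous equalities for $\tilde B$ as $L\to\infty$. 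Consequently the resonance contribution is non-zero only on the edge $\Gamma_4$ (corresponding to $L=\infty$ and $D_+$ varying): multiplying the limiting matrices and substituting $N(0)B(0)=-P_s$ via Lemma \ref{lem:N-B-zero}, together with the identity $\bigl(\begin{smallmatrix}1&1\\1&1\end{smallmatrix}\bigr)^2=2\bigl(\begin{smallmatrix}1&1\\1&1\end{smallmatrix}\bigr)$, yields the stated formula. The remaining edges come entirely from the scattering term: $\Gamma_3$ and $\Gamma_5$ are the identity since both contributions vanish there, while $\Gamma_1, \Gamma_2, \Gamma_6$ follow by direct substitution of the boundary values of $\phi, \overline\phi$ and $\tilde S_{e/o}(L)$. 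Continuity on the whole hexagon is verified at each vertex; for instance the identity $\Gamma_1(\infty)=\Gamma_2(0)=\Id+\tfrac12(S(1)-1)\bigl(\begin{smallmatrix}1&-1\\-1&1\end{smallmatrix}\bigr)$ provides one such check.

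The principal technical obstacle is the bookkeeping required to identify precisely which combinations of boundary values of $D_+$, $L$, $\sqrt{-\Delta_N}$ parametrise each of the six hexagon edges within the Cordes framework of \cite[Chapter V]{cordes79}; once this correspondence is fixed, the formulas on each edge follow by direct substitution of the limiting values in the previous paragraph, with no additional analytic input required beyond Lemma \ref{lem:N-B-zero}.
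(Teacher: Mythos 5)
Your proposal is correct and follows essentially the same route as the paper's own proof: membership in $\left(M_2(E)\otimes\mathcal{K}(\mathcal{P})\right)^\sim$ via the continuity and endpoint limits established in Lemmas \ref{lem:B-N-properties} and \ref{lem:Xi-rep} together with Theorem \ref{thm: stationary scattering operator}, then edge-by-edge evaluation of $q$ using exactly the boundary values the paper uses ($\tilde S_e(0)=S(1)$, $\tilde S_o(0)=0$, $\tilde N(0)=\tilde B(0)=0$, $\phi(\pm\infty)=\mp1$, $S(0)=\lim_{\lambda\to\infty}S(\lambda)=\textup{Id}$, $\tilde N_{e/o}(\infty)=\tfrac12 N(0)$, $\tilde B_{e/o}(\infty)=\tfrac12 B(0)$) and Lemma \ref{lem:N-B-zero} for $\Gamma_4$. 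The Cordes edge-parametrisation bookkeeping you flag as the main obstacle is likewise left implicit in the paper (it is inherited from \cite[Chapter V]{cordes79} and the analogous analysis in \cite{ANRR}), so your treatment matches the paper's level of detail.
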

\begin{proof}
The continuity and existence of the limits of the endpoints of the components of $\mathcal{V}\mathcal{U} F_0 W_- F_0^* \mathcal{U}^* \mathcal{V}^*$ have already been established in Lemmas \ref{lem:Xi-rep} and \ref{lem:B-N-properties}. Note also that by Theorem \ref{thm: stationary scattering operator} we have $\lambda \mapsto S(\lambda)-\textup{Id}$ belongs to $C_0(\R^+, \mathcal{K}(\mathcal{P}))$. Thus we find $\mathcal{V}\mathcal{U} F_0 W_- F_0^* \mathcal{U}^* \mathcal{V}^* \in \left(M_2(E) \otimes \mathcal{K}(\mathcal{P}) \right)^\sim$. 

We now consider the image of the operator $\mathcal{V}\mathcal{U} F_0 W_- F_0^* \mathcal{U}^* \mathcal{V}^* $ under the quotient map. For $\Gamma_1$, we note that $\tilde{S}_e(0) = S(1)$ and $\tilde{S}_o(0) = 0$. Since $\chi_2$ vanishes at infinity we observe also that $\tilde{N}(0) = \tilde{B}(0) = 0$. For $\Gamma_2$, we note that $\lim_{s \to \infty}{\phi(s)} = -1$ to obtain the result. For $\Gamma_6$, we use the observation that $\lim_{s \to -\infty}{\phi(s)} = 1$. For $\Gamma_3$ and $\Gamma_5$, we recall that $S(0) = \lim_{\lambda \to \infty}(S(\lambda)) = \textup{Id}$ to see that $\lim_{\lambda \to \pm \infty}{\tilde{S}_e(\ell)} = 1$ while $\lim_{\lambda \to \pm \infty} \tilde{S}_o(\ell) = 0$. Finally, for $\Gamma_4$ we note that $\lim_{\ell \to \infty}{\tilde{N}(\ell)} = N(0)$ and $\lim_{\ell \to \infty}{\tilde{B}(\ell)} = B(0)$. Thus we find
\begin{align*}
\lim_{\ell \to \infty} \tilde{N}_e(\ell) &= \lim_{\ell \to \infty} \tilde{N}_o(\ell) = \frac12 N(0)
\end{align*}
and
\begin{align*}
\lim_{\ell \to \infty} \tilde{B}_e(\ell) &= \lim_{\ell \to \infty} \tilde{B}_o(\ell) = \frac12 B(0).
\end{align*}
An application of Lemma \ref{lem:N-B-zero} then completes the proof.
\end{proof}

As a result of \cite[Lemma 5.1]{ANRR} we have the following.
\begin{lemma}
The pointwise determinant of each component $\Gamma_j$ exists and they are given by
\begin{align*}
\det(\Gamma_1(s)) &= \det(S(1)), \quad & s \in [-\infty,\infty], \\
\det(\Gamma_2(\ell)) &= \det(S(\e^{2\ell})), \quad & \ell \in [0,\infty], \\
\det(\Gamma_3(\xi)) &= 1, \quad & \xi \in [\infty, 0], \\
\det(\Gamma_4(s)) &= \left( \frac{2is-1}{2is+1} \right)^{\dim(P_s)}, \quad & s \in [\infty, -\infty], \\
\det(\Gamma_5(\xi)) &= 1 , \quad & \xi \in [0,\infty], \\
\det(\Gamma_6(\ell)) &= \det(S(\e^{-2\ell})), \quad & \ell \in [\infty,0].
\end{align*}
\end{lemma}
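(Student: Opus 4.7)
The plan is to reduce each block matrix $\Gamma_j$ to a block-diagonal form via a fibrewise unitary conjugation on $\C^2$, after which the determinant is straightforward. The cases $\Gamma_3$ and $\Gamma_5$ are immediate since those components equal the identity, so $\det(\Gamma_3) = \det(\Gamma_5) = 1$. All remaining components share the structure
\[
\Gamma_j = I_{\C^2 \otimes \mathcal{P}} + K_j \otimes M_j,
\]
where $K_j \in \mathcal{K}(\mathcal{P})$ is trace class and $M_j$ is a $2\times 2$ complex matrix of rank one, i.e. $M_j = 2 P_{v_j}$ for a rank-one orthogonal projection onto a unit vector $v_j \in \C^2$. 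The crucial observation for $\Gamma_1$ is the identity $|\phi(s)|^2 = \tanh^2(\pi s) + \cosh^{-2}(\pi s) = 1$, which shows that the off-diagonal entries of the $2\times 2$ matrix in $\Gamma_1$ have modulus one and hence its determinant is zero, so the matrix is rank one. For $\Gamma_2$, $\Gamma_4$, $\Gamma_6$ the analogous statement is obvious by inspection.

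Next, for each $j$ I would choose a unitary $u_j \in U(2)$ such that $u_j M_j u_j^* = \bigl(\begin{smallmatrix} 2 & 0 \\ 0 & 0 \end{smallmatrix}\bigr)$. Conjugating by $u_j \otimes I_\mathcal{P}$ does not change the determinant and transforms $\Gamma_j$ into the block diagonal operator
\[
(u_j \otimes I)\,\Gamma_j\,(u_j^* \otimes I) = \begin{pmatrix} I_\mathcal{P} + 2 K_j & 0 \\ 0 & I_\mathcal{P} \end{pmatrix},
\]
so $\det(\Gamma_j) = \det(I_\mathcal{P} + 2 K_j)$. For $\Gamma_1$ one has $K_1 = \tfrac{1}{2}(S(1) - I)$, giving $\det(\Gamma_1(s)) = \det(S(1))$; for $\Gamma_2$ and $\Gamma_6$ the same calculation produces $\det(S(\e^{2\ell}))$ and $\det(S(\e^{-2\ell}))$. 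Trace class properties of $S(\lambda) - I$ supplied by Theorem \ref{thm: stationary scattering operator} justify the use of the Fredholm determinant throughout.

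For $\Gamma_4$ the operator factor is $K_4 = -\tfrac{1}{1+2is} P_s$ with $P_s$ finite rank (dimension $\dim(P_s) \in \{0,1\}$), so the Fredholm determinant reduces to an ordinary determinant on a finite-dimensional subspace:
\[
\det\!\bigl(I_\mathcal{P} - \tfrac{2}{1+2is} P_s\bigr) = \Bigl(1 - \tfrac{2}{1+2is}\Bigr)^{\dim(P_s)} = \Bigl(\tfrac{2is - 1}{2is + 1}\Bigr)^{\dim(P_s)},
\]
which yields the stated formula. I expect the main (mild) technical point to be verifying that the factorisation $M_j = 2 P_{v_j}$ is uniform in the parameter $s$ for $\Gamma_1$, so that the conjugating unitary $u_1(s)$ is continuous in $s$ and the identification holds pointwise; this amounts to exhibiting the explicit unit vector $v_1(s) = \tfrac{1}{\sqrt{2}}(1, \overline{\phi(s)})^{\top}$ and noting that $\phi$ is continuous on $[-\infty,\infty]$ with well-defined limits $\pm 1$.
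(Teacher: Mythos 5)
Your proof is correct. Note, however, that the paper contains no in-text proof of this lemma at all: it is stated ``as a result of \cite[Lemma 5.1]{ANRR}'', i.e.\ the determinant computation is outsourced to the two-dimensional companion paper, so your proposal supplies a self-contained verification where the paper defers to a citation. Your route---writing each nontrivial component as $\textup{Id} + K_j \otimes 2P_{v_j}$ with $P_{v_j}$ a rank-one projection (the key point for $\Gamma_1$ being $|\phi(s)|^2 = \tanh^2(\pi s) + \cosh(\pi s)^{-2} = 1$, with eigenvector $v_1(s) = \tfrac{1}{\sqrt{2}}(1,\overline{\phi(s)})^{\top}$ for the eigenvalue $2$) and conjugating fibrewise to block-diagonal form---is sound, and the trace-class hypothesis needed for the Fredholm determinants is exactly what Theorem \ref{thm: stationary scattering operator} provides, namely $S(\lambda)-\textup{Id} \in \mathcal{L}^1$ for $\rho > 12$; the finite-rank computation for $\Gamma_4$ and the resulting M\"obius factor $\bigl(\tfrac{2is-1}{2is+1}\bigr)^{\dim(P_s)}$ are likewise correct. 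An equally quick alternative, avoiding any diagonalisation, is the determinant identity $\det\begin{pmatrix} A & B \\ C & D \end{pmatrix} = \det(AD-BC)$ for commuting blocks (here all blocks are scalar multiples of $S(\cdot)-\textup{Id}$ plus scalars, so they commute): for $\Gamma_1$ this gives
\begin{align*}
\det(\Gamma_1(s)) &= \det\Bigl( \bigl(\textup{Id}+\tfrac12(S(1)-\textup{Id})\bigr)^2 - \tfrac14\,|\phi(s)|^2\,(S(1)-\textup{Id})^2 \Bigr) = \det(S(1)),
\end{align*}
and similarly for the other edges. One small remark: the technical point you flag at the end, continuity of the conjugating unitary $u_1(s)$ in $s$, is not actually needed---the lemma is a pointwise statement, so $u_1(s)$ may be chosen separately for each fixed $s$, and continuity of $s \mapsto \det(\Gamma_1(s))$ is automatic since the value is constant in $s$.
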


We now decouple the resonant contribution to the wave operator. We do this in order to isolate the numerical contribution of resonances to Levinson's theorem.

Define the operators $W_S, W_R  \in \B(\H)$ by the equalities
\begin{align}
F_0(W_S-\textup{Id}) F_0^* &= \frac12 \varphi\left(-\frac12 D_+ \right)(S(L)-\textup{Id}).
\end{align}
and
\begin{align*}
F_0(W_R-\textup{Id}) &= -N \Xi B.
\end{align*}
Then we have the following.
\begin{lemma}
The operators $W_S$ and $W_R$ are Fredholm and we have the equality
\begin{align}
W_- &= W_S W_R + K
\end{align}
for a compact operator $K$. In particular, we have
\begin{align}
\textup{Index}(W_-) &= \textup{Index}(W_S) + \textup{Index}(W_R).
\end{align}
\end{lemma}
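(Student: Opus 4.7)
The plan is to deploy the hexagonal symbol calculus of Proposition 4.1 to handle both the factorisation modulo compacts and the Fredholm property in one stroke. By the defining equations for $W_S$ and $W_R$ together with Equation (3.4), and identifying $F_0\varphi(D_4)F_0^*$ applied to $(S(L)-\textup{Id})$ with $\tfrac{1}{2}\varphi(-\tfrac{1}{2}D_+)(S(L)-\textup{Id})$ modulo compacts (justified by the vanishing of $S-\textup{Id}$ at $L=0$ and $L=\infty$ from Theorem 2.6), one has
\begin{align*}
F_0(W_--W_SW_R)F_0^* &= K_1 - F_0(W_S-\textup{Id})F_0^*\cdot F_0(W_R-\textup{Id})F_0^*
\end{align*}
for some compact $K_1$. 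Hence the identity $W_-=W_SW_R+K$ reduces to proving compactness of $(W_S-\textup{Id})(W_R-\textup{Id})$.

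To establish this compactness I conjugate $W_S$ and $W_R$ through $\mathcal{V}\mathcal{U}F_0$, so that both lie in $(M_2(E)\otimes\K(\mathcal{P}))^\sim$ by the same Cordes-algebra argument used in Lemma 3.8 and Proposition 4.1. Under the quotient map $q$, the symbol of $W_S-\textup{Id}$ is supported on the scattering edges $\Gamma_1,\Gamma_2,\Gamma_6$ and vanishes on $\Gamma_3,\Gamma_4,\Gamma_5$, while the symbol of $W_R-\textup{Id}$ is supported on the resonance edge $\Gamma_4$ and vanishes on the remaining edges. The two supports meet only at the vertices lying on $\Gamma_3$ or $\Gamma_5$, at which the scattering factor $S-\textup{Id}$ itself vanishes (again by $S(0)=\lim_{\lambda\to\infty}S(\lambda)=\textup{Id}$ from Theorem 2.6). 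Hence $q(W_S-\textup{Id})\cdot q(W_R-\textup{Id})=0$, so the product lies in the kernel of $q$ and is compact.

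The Fredholm property is then read off the same symbols. The symbol $q(W_S)$ agrees with the $\Gamma$ of Proposition 4.1 on the scattering edges $\Gamma_1,\Gamma_2,\Gamma_6$ and equals $\textup{Id}_2$ on $\Gamma_3,\Gamma_4,\Gamma_5$, so its pointwise determinant is $\det(S(\lambda))$ on the scattering edges and $1$ elsewhere, hence nowhere zero by unitarity of $S(\lambda)$. Similarly $q(W_R)$ equals $\textup{Id}_2$ off $\Gamma_4$ and agrees with the $\Gamma_4$ of Proposition 4.1 on $\Gamma_4$, with pointwise determinant $\bigl((2is-1)/(2is+1)\bigr)^{\dim(P_s)}$, also nowhere zero. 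Pointwise invertibility of the symbols in the quotient algebra is equivalent to Fredholmness of the lifts, so both $W_S$ and $W_R$ are Fredholm, and $\textup{Index}(W_-)=\textup{Index}(W_S)+\textup{Index}(W_R)$ follows from $W_-=W_SW_R+K$ and the multiplicativity of the Fredholm index. The most delicate point I foresee will be the initial identification of the two forms of the scattering contribution in the first display; once the compact remainders arising from the boundary behaviour of $S-\textup{Id}$ are absorbed, the rest reduces to a routine symbol calculation.
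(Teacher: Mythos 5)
Your argument is correct and lands on the same factorisation as the paper, but it differs in two genuine ways. For the identity $W_-=W_SW_R+K$, the paper writes $W_-=W_S+(W_R-\textup{Id})+K=W_S(\textup{Id}+W_S^*(W_R-\textup{Id}))+\tilde{K}$ (using that $W_SW_S^*=\textup{Id}$ modulo compacts) and then verifies the edge-by-edge identity $\Gamma_{S,j}^*(\Gamma_{R,j}-\textup{Id})=\Gamma_{R,j}-\textup{Id}$ to conclude $W_R=\textup{Id}+W_S^*(W_R-\textup{Id})$ modulo compacts; you instead expand $W_SW_R$ directly and show $(W_S-\textup{Id})(W_R-\textup{Id})$ is compact because the two symbols have disjoint supports. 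These are the same computation in disguise --- the paper's identity is precisely $(\Gamma_{S,j}^*-\textup{Id})(\Gamma_{R,j}-\textup{Id})=0$ --- but your version is marginally cleaner in that it needs no parametrix for $W_S$ at that stage; note also that the closed supports are genuinely disjoint, separated by the edges $\Gamma_3,\Gamma_5$ on which both symbols are the identity, so your vertex discussion is superfluous. For Fredholmness, the paper outsources to \cite[Lemma 5.3 and Proposition 5.5]{ANRR}, where $W_{S^*}$ and $W_R^*$ are shown to be parametrices, whereas you deduce it internally from pointwise invertibility of the hexagon symbols; this is legitimate given the short exact sequence, but passing from nonvanishing determinant to pointwise invertibility uses that each edge value is $\textup{Id}$ plus trace class (from $S(\lambda)-\textup{Id}\in\mathcal{L}^1$ in Theorem \ref{thm: stationary scattering operator}, and $P_s$ finite rank) --- alternatively, each edge symbol is unitary, which gives invertibility at once. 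One point to tighten: your identification of $F_0\varphi(D_4)(S-\textup{Id})F_0^*$ with the defining expression for $W_S-\textup{Id}$ modulo compacts requires the symbols to agree on \emph{all six} edges, not merely the vanishing of $S-\textup{Id}$ at $L=0,\infty$; on the edge $\Gamma_1$, where the dilation variable runs, the agreement is exactly what the definition of $W_S$ encodes (the paper is equally implicit here, so this is a shared, not a fatal, gap).
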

\begin{proof}
To see that $W_S$ defines a Fredholm operator, it is sufficient to note that $W_{S^*}$ defines an inverse for $W_S$ up to compacts (see \cite[Lemma 5.3]{ANRR}). As in the proof of \cite[Proposition 5.5]{ANRR} we have that $W_R^*$ defines an inverse for $W_R$ modulo compacts. We next observe that by Equation \eqref{eq:Wave-op-expression} we have the equality
\begin{align}\label{eq:factorisation}
W_- &= W_S + (W_R-\textup{Id})+K = W_S(\textup{Id} + W_S^* \left(W_R-\textup{Id} \right)) + \tilde{K}
\end{align}
with $K, \tilde{K}$ compact operators. Denote by $\Gamma_{S,j}$ and $\Gamma_{R,j}$ the components of $\mathcal{V}\mathcal{U} F_0 W_S F_0^* \mathcal{U}^* \mathcal{V}^*$ and $\mathcal{V}\mathcal{U} F_0 W_R F_0^* \mathcal{U}^* \mathcal{V}^*$ under the quotient map $q$. A proof similar to Proposition \ref{prop:quotient} shows that
\begin{align*}
\Gamma_{S,1}(s) &= \begin{pmatrix} 1 & 0 \\ 0 & 1 \end{pmatrix} + \frac12 (S(1)-1) \begin{pmatrix} 1 & \phi(s) \\ \overline{\phi(s)} & 1 \end{pmatrix}, \quad & s \in [-\infty,\infty], \\
\Gamma_{S,2}(\ell) &= \begin{pmatrix} 1 & 0 \\ 0 & 1 \end{pmatrix}  + \frac12 (S(\e^{2\ell})-1) \begin{pmatrix} 1 & -1 \\ -1 & 1 \end{pmatrix}, \quad & \ell \in [0,\infty], \\
\Gamma_{S,3}(\xi) &= \begin{pmatrix} 1 & 0 \\ 0 & 1 \end{pmatrix}, \quad & \xi \in [\infty, 0], \\
\Gamma_{S,4}(s) &= \begin{pmatrix} 1 & 0 \\ 0 & 1 \end{pmatrix} , \quad & s \in [\infty, -\infty], \\
\Gamma_{S,5}(\xi) &= \begin{pmatrix} 1 & 0 \\ 0 & 1 \end{pmatrix} , \quad & \xi \in [0,\infty], \\
\Gamma_{S,6}(\ell) &= \begin{pmatrix} 1 & 0 \\ 0 & 1 \end{pmatrix}  + \frac12 (S(\e^{-2\ell})-1) \begin{pmatrix} 1 & 1 \\ 1 & 1 \end{pmatrix}, \quad & \ell \in [\infty,0]
\end{align*}
and
\begin{align*}
\Gamma_{R,4}(s) &= \begin{pmatrix} 1 & 0 \\ 0 & 1 \end{pmatrix}  +\frac12 \frac{2}{1+2is} P_s \begin{pmatrix} 1 & 1 \\ 1 & 1 \end{pmatrix}, \quad &s \in [\infty,-\infty], \\
\Gamma_{R,j} &= \begin{pmatrix} 1 & 0 \\ 0 & 1 \end{pmatrix}, \quad & j \in \{1,2,3,5,6\}.
\end{align*}
Explicit computation then shows
\begin{align*}
\Gamma_{S,j}^* \left( \Gamma_{R,j} - \begin{pmatrix} 1 & 0 \\ 0 & 1 \end{pmatrix} \right) &= \Gamma_{R,j} - \begin{pmatrix} 1 & 0 \\ 0 & 1 \end{pmatrix}.
\end{align*}
In particular we find 
\begin{align*}
\Gamma_{S,j}^* \left( \Gamma_{R,j} - \begin{pmatrix} 1 & 0 \\ 0 & 1 \end{pmatrix} \right) &= \begin{pmatrix} 0 & 0 \\ 0 & 0 \end{pmatrix}
\end{align*}
if $j \in \{1,2,3,5,6\}$ and
\begin{align*}
\Gamma_{S,4}^* (s)\left( \Gamma_{R,4}(s) - \begin{pmatrix} 1 & 0 \\ 0 & 1 \end{pmatrix} \right) &= -\frac12 \frac{2}{1+2is} P_s \begin{pmatrix} 1 & 1 \\ 1 & 1 \end{pmatrix}
\end{align*}
for $s \in [\infty,-\infty]$. Thus we find $q(\mathcal{V}\mathcal{U}F_0 (W_S^*(W_R-\textup{Id})) F_0^* \mathcal{U}^*\mathcal{V}^*) = q(\mathcal{V}\mathcal{U}F_0 (W_R-\textup{Id})F_0^* \mathcal{U}^*\mathcal{V}^*)$. Since their images under the quotient map agree, we have $W_R = \textup{Id}+W_S^*(W_R-\textup{Id})$ (mod compacts). The result then follows from Equation \eqref{eq:factorisation}.
\end{proof}

\begin{lemma}
We have the equality
\begin{align*}
\textup{Index}(W_R) &=  \dim(P_s).
\end{align*}
\end{lemma}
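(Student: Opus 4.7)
The plan is to identify $\textup{Index}(W_R)$ with a winding number via the six-term exact sequence in $K$-theory associated with the ideal-quotient extension
\[
0 \to \K\big(L^2(\R^+;\mathcal{P})^{\oplus 2}\big) \to \big(M_2(E)\otimes\K(\mathcal{P})\big)^\sim \stackrel{q}{\to} \big(M_2(C(\hexagon))\otimes\K(\mathcal{P})\big)^\sim \to 0
\]
set up earlier in this section. Under the boundary map $K_1 \to K_0(\K) \cong \Z$, the class of the Fredholm operator $W_R$ is sent to its index, which equals (up to a sign determined by the orientation of the hexagon) the winding number of $\det\Gamma_R$ around the hexagonal boundary, where $\Gamma_R = q(\mathcal{V}\mathcal{U}F_0 W_R F_0^* \mathcal{U}^* \mathcal{V}^*)$.

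First I would compute the six pointwise determinants of $\Gamma_R$. By the previous lemma, $\Gamma_{R,j}$ equals the identity on edges $j\in\{1,2,3,5,6\}$, so these contribute trivially. On edge $4$, the factorisation $W_- = W_S W_R + K$ modulo compacts, combined with $\Gamma_{S,4} = \textup{Id}$, forces $\Gamma_{R,4} = \Gamma_4$, and the earlier determinant computation then yields $\det\Gamma_{R,4}(s) = \left(\frac{2is-1}{2is+1}\right)^{\dim(P_s)}$. The core computation is then the winding number of $h(s) := \frac{2is-1}{2is+1}$ as $s$ decreases from $+\infty$ to $-\infty$. Writing $h(s) = \frac{w-1}{w+1}$ with $w = 2is$, I recognise $h$ as the restriction of the Cayley transform to the imaginary axis, mapping $\R$ diffeomorphically onto $S^1 \setminus \{1\}$; the values $h(\tfrac12) = i$, $h(0) = -1$ and $h(-\tfrac12) = -i$ show that the image sweeps counterclockwise once around the origin, giving winding $+1$ for $h$ and hence winding $\dim(P_s)$ for $h^{\dim(P_s)}$. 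Summing over all six edges yields total winding $\dim(P_s)$, and hence $\textup{Index}(W_R) = \dim(P_s)$.

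The main obstacle is pinning down the sign and orientation conventions: one must verify that the counterclockwise orientation produced by the Cayley transform matches the chosen orientation of the hexagonal boundary and the sign in the $K$-theoretic boundary map, so that $\dim(P_s)$ appears with the correct positive sign. This is fixed by consistency with the determinant formulas already computed for $\Gamma_4$ and with the index formula for $W_-$ established earlier in the paper.
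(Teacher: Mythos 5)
Your proposal is correct and follows essentially the same route as the paper: the paper also passes to the quotient symbol $\Gamma_R$ on the hexagon (with $\Gamma_{R,j}=\mathrm{Id}$ for $j\neq 4$ and $\det\Gamma_{R,4}(s)=\bigl(\frac{2is-1}{2is+1}\bigr)^{\dim(P_s)}$) and computes $\textup{Index}(W_R)$ as the sum of winding numbers via Gohberg--Kre\u{\i}n theory, which is the concrete form of the $K$-theoretic boundary map you invoke. The only cosmetic difference is that you track the Cayley transform around the circle to get winding $+1$ on the edge oriented from $+\infty$ to $-\infty$, whereas the paper evaluates the logarithmic-derivative integral $-\frac{1}{2\pi i}\int_{-\infty}^{\infty}\frac{\frac{\d}{\d s}\det(\Gamma_{R,4}(s))}{\det(\Gamma_{R,4}(s))}\,\d s=\frac{1}{2\pi i}\int_{-\infty}^{\infty}\frac{4i}{4s^2+1}\,\d s=1$ explicitly, thereby fixing the orientation and sign conventions you flagged as the remaining obstacle.
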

\begin{proof}
The operator $q(\mathcal{V}\mathcal{U} F_0 W_R F_0^* \mathcal{U}^* \mathcal{V}^*)$ has components \begin{align*}
\Gamma_{R,4}(s) &= \begin{pmatrix} 1 & 0 \\ 0 & 1 \end{pmatrix} - \frac12 \frac{2}{1+2is} P_s \begin{pmatrix} 1 & 1 \\ 1 & 1 \end{pmatrix}, \quad &s \in [\infty,-\infty], \\
\Gamma_{R,j} &= \begin{pmatrix} 1 & 0 \\ 0 & 1 \end{pmatrix}, \quad & j \in \{1,2,3,5,6\}.
\end{align*}
The index of $W_R$ can then be computed using Gohberg-Kre\u{\i}n  theory \cite{GK}  as the sum of winding numbers 
\begin{align*}
\textup{Index}(W_R) &= \sum_{j=1}^6{\textup{Wind}(\Gamma_{R,j})}.
\end{align*}
For $j \in \{1,2,3,5,6\}$ we have $\textup{Wind}(\Gamma_{R,j}) = 0$. If $\dim(P_s) = 0$ we have $\textup{Wind}(\Gamma_{R,4}) = 0$ also. If $\dim(P_s) = 1$, then we find
\begin{align*}
\textup{Wind}(\Gamma_{R,4}) &= - \frac{1}{2\pi i} \int_{-\infty}^\infty{ \frac{ \frac{\d }{\d s} \det(\Gamma_{R,4}(s))}{\det(\Gamma_{R,4}(s))} \, \d s} \\
&= \frac{1}{2\pi i} \int_{-\infty}^\infty{ \frac{4i}{4s^2+1} \, \d s} \\
&= 1,
\end{align*}
from which the result follows.
\end{proof}

\begin{cor}\label{cor:index-resonance}
We have the equality
\begin{align*}
\textup{Index}(W_-) &= \textup{Index}(W_S) + \dim(P_s).
\end{align*}
\end{cor}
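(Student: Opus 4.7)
The proof will be an immediate assembly of the two lemmas that directly precede the statement, so the plan is quite short.

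First I would invoke the factorisation lemma, which establishes $W_- = W_S W_R + K$ for some compact operator $K$, together with Fredholmness of both $W_S$ and $W_R$. Because the Fredholm index is invariant under compact perturbations and additive under composition of Fredholm operators, this immediately yields
\[
\textup{Index}(W_-) = \textup{Index}(W_S W_R) = \textup{Index}(W_S) + \textup{Index}(W_R).
\]
Second, I would substitute $\textup{Index}(W_R) = \dim(P_s)$, which was obtained in the subsequent lemma via the Gohberg--Kre\u{\i}n winding number computation on the edge $\Gamma_{R,4}$ of the hexagon (with all other edges contributing zero winding). Adding the two identities produces the claim.

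I do not anticipate any real obstacle: the substantive content — the decoupling $W_- \equiv W_S W_R$ modulo compacts, the explicit description of the hexagonal symbol of $W_R$, and the evaluation of the winding number $\textup{Wind}(\Gamma_{R,4}) = \dim(P_s)$ — has already been carried out in the two lemmas above. The corollary merely records the resulting splitting of $\textup{Index}(W_-)$ into a ``scattering'' contribution $\textup{Index}(W_S)$ and a ``resonance'' contribution $\dim(P_s) \in \{0,1\}$, which is the form in which Levinson's theorem acquires its topological interpretation.
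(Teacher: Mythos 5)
Your proposal is correct and matches the paper exactly: the corollary is stated there without a separate proof precisely because it is the immediate combination of the preceding lemma giving $W_- = W_S W_R + K$ with $\textup{Index}(W_-) = \textup{Index}(W_S) + \textup{Index}(W_R)$, and the lemma computing $\textup{Index}(W_R) = \dim(P_s)$ via the winding number of $\Gamma_{R,4}$. Nothing is missing.
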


To determine an analytic formula for $\textup{Index}(W_S)$ requires a more subtle analysis of the high-energy behaviour of the scattering matrix (see for example \cite[Theorem III.1]{guillope81} and \cite[Section 9.2]{yafaev10}) and we defer the proof to a future publication. 
The analytic formula has previously been obtained by \cite{jia12}, and reads
\begin{align}\label{eq:jia-levinson-again}
\textup{Index}(W_-)= -N=\frac{1}{2\pi i} \int_0^\infty \left( \textup{Tr} \left(S(\lambda)^*S'(\lambda) \right) - c_1 \right) \, \d \lambda + \dim(P_s) - \beta_2. 
\end{align}
Here the constants are given by
\[
c_1= -\frac{(2\pi i) \textup{Vol}(\Sf^3)}{2(2\pi)^4} \int_{\R^4}V(x)\,\d x\quad\mbox{and}\quad \beta_2=- \frac{\textup{Vol}(\Sf^3)}{4(2\pi)^4} \int_{\R^4}V(x)^2\, \d x.
\]
The proof in \cite{jia12} used high energy asymptotics of the spectral shift function, obtained using heat kernel methods. Our approach also uses high energy asymptotics for the spectral shift, but then utilises regularised determinants, the limiting absorption principle and Gohberg-Kre\u{\i}n theory.


\begin{thebibliography}{99}

%\bibitem{abramowitz72} M. Abramowitz, I. Stegun. {\em Handbook of Mathematical Functions: with Formulas, Graphs, and Mathematical Tables}. Applied Mathematics Series 55. NBS, 10 edition, 1972.

\bibitem{agmon75}
S. Agmon. {\em Spectral properties of {S}chr{\"{o}}dinger operators and scattering theory}, Accademia Nazionale dei Lincei; Scuola Normale Superiore di Pisa, Pisa, 1975.

\bibitem{alexander23}
A. Alexander.
{\em Topological {L}evinson's theorem via index pairings and spectral flow}, PhD thesis, 2023.

\bibitem{ANRR}
A. Alexander, D. T. Nguyen, A. Rennie, S. Richard.
{\em Levinson's theorem for two-dimensional scattering systems: it was a surprise, it is now topological!},
arXiv preprint: \href{https://arxiv.org/abs/2311.09650}{https://arxiv.org/abs/2311.09650}, 2023. 

\bibitem{AR23}
A. Alexander, A. Rennie.
{\em Levinson's theorem as an index pairing}, arXiv preprint: \href{https://arxiv.org/abs/2304.04905}{https://arxiv.org/abs/2304.04905}, 2023.

\bibitem{amrein96} W. Amrein, A. Boutet de Monvel, V. Georgescu.
{\em $C_0$-Groups, Commutator Methods and Spectral Theory of $N$-Body Hamiltonians}, Modern Birkh\"{a}user Classics. Birkh\"{a}user, 1st edition, 1996.

%\bibitem{andrews99} G. Andrews, R. Askey, R. Roy. {\em Special functions}, Encyclopedia of mathematics and its applications 71. Cambridge University Press, 1999.

%\bibitem{alsholm71} P. Alsholm, G. Schmidt. {\em Spectral and scattering theory for {S}chr\"{o}dinger operators}.  Arch. Rational Mech. Anal., {\bf 40}, 1971, 281--311.

\bibitem{bellissard12}
J. Bellissard, H. Schulz-Baldes. {\em Scattering theory for lattice operators in dimension {$d \geq 3$}}, Rev. Math. Phys., {\bf 24} (8), 2012, 12500020.

%\bibitem{bolle77}
%D. Boll\'{e}, T. A. Osborn. {\em An extended {L}evinson's theorem}, J. Mathematical Phys., {\bf 18} (3), 1977, 432--440.

\bibitem{bolle86}
D. Boll\'{e}, F. Gesztesy, C. Danneels, S. F. J. Wilk,
{\em Threshold behaviour and Levinson's theorem for two-dimensional scattering systems: a surprise}, Phys. Rev. Lett. {\bf 56}, 1986, 900--903.

\bibitem{bolle88} D. Boll\'{e}, C. Danneels, F. Gesztesy. {\em Threshold scattering in two dimensions}, Ann. Inst. Henri Poincar\'{e} Phys. Th\'{e}or., {\bf 48} (2), 1988.

%\bibitem{CGRS2} A. Carey, V. Gayral, A. Rennie, F. Sukochev, {\em Index theory for locally compact noncommutative spaces}, 
%Mem. Amer. Math. Soc., {\bf 231} (1085), 2014.
%
%\bibitem{CPR4} A. Carey, J. Phillips, A. Rennie, {\em Spectral triples: examples and index theory}, 
%in `Noncommutative Geometry and Physics: Renormalisation, Motives, Index Theory', Ed. A. Carey, EMS, 2011, 175--265.
%
%\bibitem{carey06}
%A. Carey, J. Phillips, A. Rennie, F. Sukochev. {\em The local index formula in semifinite von {N}eumann algebras {I}: {S}pectral flow}, Adv. Math., {\bf 202} (2), 2006, 451--516.

\bibitem{cordes79}
H. O. Cordes.
{\em Elliptic pseudodifferential operators - an abstract theory}, Lecture Notes in Mathematics 756, Springer, Berlin, 1979.

\bibitem{DR}
J. Derezinski, S. Richard,
\newblock On Schr\"{o}dinger operators with inverse square potentials on the half-line,
\newblock Ann. Henri Poincar\'e {\bf 18}, 869--928, 2017.

\bibitem{dong02}
S.-H. Dong, Z.-Q. Ma.
{\em Nonrelativistic Levinson's theorem in $D$ dimensions},
Phys. Rev. A, {\bf 65} (4), 2002. 

%\bibitem{zworski19}
%S. Dyatlov, M. Zworski. {\em Mathematical theory of scattering resonances}, volume 200 of
%  Graduate Studies in Mathematics. American Mathematical Society, Providence, RI, 2019.

\bibitem{erdogan14}
M. B. Erdo\u{g}an, M. Goldberg, W. R. Green. {\em Dispersive estimates for four dimensional {S}chr\"{o}dinger and wave equations with obstructions at zero energy}, Comm. Partial Differential Equations, {\bf 39} (10), 2014, 1936--1964.


\bibitem{GK}
I.C. Gohberg, M.G. Kre\u{\i}n,
\newblock The basic propositions on defect numbers, root numbers and indices of linear operators,
\newblock Amer. Math. Soc. Transl. {\bf 13} no. 2, 185--264, 1960.

\bibitem{goldberg17}
M. Goldberg, W. R. Green.
{\em On the {$L^p$} boundedness of wave operators for four-dimensional {S}chr\"{o}dinger operators with a threshold eigenvalue}, Ann. Henri Poincare, {\bf 18} (4), 2017, 1269--1288.

\bibitem{green17}
W. R. Green, E. Toprak.
{\em Decay estimates for four dimensional {S}chr\"{o}dinger, {K}lein-{G}ordon and wave equations with obstructions at zero energy}, Differential Integral Equations {\bf 30} (5-6), 2017, 329--386.

\bibitem{guillope81}
L. Guillop\'{e}. {\em Une formule de trace pour l'op\'{e}rateur de {S}chr\"{o}dinger}, PhD Thesis, Universit\'{e} Joseph Fourier Grenoble, 1981. Available at \href{https://www.math.sciences.univ-nantes.fr/~guillope/LG/these_1981.pdf}{https://www.math.sciences.univ-nantes.fr/$\sim$guillope/LG/these\textunderscore1981.pdf}

%\bibitem{HR} N. Higson, J. Roe, {\em Analytic $K$-Homology},
%Oxford University Press, 2000.

\bibitem{inoue19}
H. Inoue, N. Tsuzu. {\em Schr\"{o}dinger wave operators on the discrete half-line}, Integral Equations Operator Theory, {\bf 91} (5), 2019.

\bibitem{inoue19ii}
H. Inoue, S. Richard. {\em Topological {L}evinson's theorem for inverse square potentials: complex, infinite, but not exceptional}, Rev. Roumaine Math. Pures Appl., {\bf 64} (2-3), 2019, 225-250.

\bibitem{inoue20}
H. Inoue. {\em Explicit formula for {S}chr\"{o}dinger wave operators on the half-line for potentials up to optimal decay}, J. Funct. Anal., {\bf 279} (7), 2020, 108630.

\bibitem{isozaki12}
H. Isozaki, S. Richard. {\em On the wave operators for the {F}riedrichs-{F}addeev model}, Ann. Henri Poincar\'{e}, {\bf 1} (6), 2012, 1469-1482.

%\bibitem{jeffrey08}
%A. Jeffrey, H-H. Dai. {\em Handbook of mathematical formulas and integrals}, Elsevier/Academic Press, Amsterdam, fourth edition, 2008.

%\bibitem{jensen79}
%A. Jensen, T. Kato. {\em Spectral properties of {S}chr\"{o}dinger operators and time-decay of
%  the wave functions}, Duke Math. J., {\bf 46} (3), 1979, 583--611.

\bibitem{jensen80}
A. Jensen. {\em Spectral properties of {S}chr\"{o}dinger operators and time-decay of the wave functions results in {$L^{2}(\R^{m})$}, {$m\geq 5$}},  Duke Math. J., {\bf 47} (1), 1980, 57--80.

\bibitem{jensen81}
A. Jensen. {\em Time-delay in potential scattering theory. {S}ome ``geometric''
  results}, Comm. Math. Phys., {\bf 82} (3), 1981, 435--456.

\bibitem{jensen84}
A. Jensen. {\em Spectral properties of {S}chr{\"{o}}dinger operators and time-decay
  of the wave functions. {R}esults in {$L^{2}(\R^{4})$}},  J. Math. Anal. Appl., {\bf 101} (2), 1984, 397--422.

\bibitem{jensen01}
A. Jensen, G. Nenciu. {\em A unified approach to resolvent expansions at thresholds}, Rev. Math. Phys, {\bf 13} (6), 2001, 717--754.

\bibitem{jia12}
X. Jia, F. Nicoleau, X.P. Wang.
{\em A new {L}evinson's theorem for potentials with critical decay},
Ann. Henri Poincar\'{e}, {\bf 13} (1), 2012, 41-84.

\bibitem{kato59}
T. Kato. {\em Growth properties of solutions of the reduced wave equation with a variable coefficient}, Comm. Pure Appl. Math., {\bf 12}, 1959, 40--425.

\bibitem{kellendonk11}
J. Kellendonk, K. Pankrashkin, S. Richard. {\em Levinson's theorem and higher degree traces for {A}haronov-{B}ohm operators}, J. Math. Phys., {\bf 52} (5), 2011, 052102.
 
\bibitem{kellendonk06}
J. Kellendonk, S. Richard.
{\em Levinson's theorem for {S}chr{\"{o}}dinger operators with point interaction: a topological approach},  J. Phys. A, {\bf 39} (46), 2006, 14397--14403.

\bibitem{kellendonk08}
J. Kellendonk, S. Richard.
{\em  On the structure of the wave operators in one dimensional potential
  scattering}, Math. Phys. Electron. J., {\bf 14}, 1-21, 2008.


\bibitem{kellendonk12}
J. Kellendonk, S. Richard. {\em On the wave operators and {L}evinson's theorem for potential
  scattering in {$\mathbb{R}^3$}}, Asian-Eur. J. Math., {\bf 5} (1), 2012.

\bibitem{KS80} 
M. Klaus, B. Simon.
{\em Coupling constant thresholds in nonrelativistic quantum mechanics. {I}. Short-range two-body case}, Ann. Physics, {\bf 130} (2), 1980, 251--281.

\bibitem{kuroda78}
S. T. Kuroda. \newblock {\em An introduction to scattering theory}, volume 51 of {\em Lecture
  Notes Series}. Aarhus Universitet, Matematisk Institut, Aarhus, 1978.

\bibitem{levinson49}
N. Levinson. {\em On the uniqueness of the potential in a {S}chr\"{o}dinger equation for a given asymptotic phase}, Danske Vid. Selsk. Mat.-Fys. Medd., {\bf 25} (9), 1949.

%\bibitem{murata84}
%M. Murata. {\em High energy resolvent estimates. {II}. {H}igher order elliptic operators}, J. Math. Soc. Japan, {\bf 36} (1), 1984, 1--10.

\bibitem{newton60}
R. G. Newton.
{\em Analytic properties of radial wave  functions}, J. Mathematical Phys., {\bf 1}, 1960, 319-347.

\bibitem{newton77}
R. G. Newton.
{\em Noncentral potentials: the generalized Levinson theorem and the structure of the spectrum}, J. Mathematical Phys., {\bf 18} (7), 1977, 1348-1357.

\bibitem{richard14}
K. Pankrashkin, S. Richard. {\em One-dimensional {D}irac operators with zero-range interactions: spectral, scattering, and topological results}, J. Math. Phys. {\bf 55} (6), 2014, 062305.

%\bibitem{pearson88}
%D. B. Pearson. {\em Quantum scattering and spectral theory}, volume 9 of {\em Techniques of Physics}. Academic Press, Inc., Harcourt Brace Jovanovich, Publishers, London, 1988.

\bibitem{richard10}
S. Richard, R. Tiedra de Aldecoa. {\em New formulae for the wave operators for a rank one interaction}, Integral Equations and Operator Theory, {\bf 66} (2), 2010, 283--292.

\bibitem{richard13}
S. Richard, R. Tiedra de Aldecoa. {\em New expressions for the wave operators of {S}chr\"{o}dinger operators in {$\mathbb{R}^3$}}, Lett. Math. Phys., {\bf 103} (11), 2013, 1207--1221.

\bibitem{richard13ii} S. Richard, R. Tiedra de Aldecoa. {\em Explicit formulas for the {S}chr\"{o}dinger wave operators in {$\mathbb{R}^2$}}, C. R. Math. Acad. Sci. Paris, {\bf 351} (5-6), 2013, 209--214.

\bibitem{richard21}
S. Richard, R. Tiedra de Aldecoa, L. Zhang.
{\em Scattering operator and wave operators for 2{D} {S}chr\"{o}dinger operators with threshold obstructions}, Complex Anal. Oper. Theory, {\bf 15} (6) 2021.

%\bibitem{simon79}
%B. Simon. {\em Trace Ideals and Their Applications}, Mathematical surveys and monographs. American Mathematical Society, 2005.

\bibitem{yafaev92}
D. R. Yafaev. {\em Mathematical Scattering Theory: General Theory}, {\bf 105}, Translations of mathematical monographs, American Mathematical Society, 1992.

\bibitem{yafaev10}
D. R. Yafaev. {\em Mathematical scattering theory: Analytic theory}, {\bf 158}, 
  Mathematical surveys and monographs, American Mathematical Society, 2010.

\bibitem{yajima22}
K. Yajima.
{\em The {$L^p$}-boundedness of wave operators for four-dimensional {S}chr\"{o}dinger operators}, In: The physics and mathematics of {E}lliott {L}ieb---the 90th anniversary. {V}ol. {II}, EMS Press, Berlin, 2022, 517--563.







\end{thebibliography}
\end{document}